\title{Cogrowth for group actions with strongly contracting elements}
\author{Goulnara N. Arzhantseva}
\author{Christopher H. Cashen}
\address{Universit\"at Wien, Fakult\"at f\"ur Mathematik\\
Oskar-Morgenstern-Platz 1, 1090 Wien, \"Osterreich.}
\email{\href{mailto:goulnara.arzhantseva@univie.ac.at}{goulnara.arzhantseva@univie.ac.at}, \href{mailto:christopher.cashen@univie.ac.at}{christopher.cashen@univie.ac.at}}
\subjclass[2010]{20F69, 37C35, 20F65, 20F67, 20F36}
\keywords{Cogrowth, exponential growth, divergence type, contracting element, mapping class groups, right-angled Artin groups, snowflake groups, CAT(0) groups}
\thanks{This research was partially supported by the European Research
  Council (ERC) grant of Goulnara Arzhantseva, ``ANALYTIC'' grant
  agreement no.\ 259527. The second author is supported by the Austrian Science Fund (FWF): P~30487\=/N35.} 
\theoremstyle{plain}
\newtheorem{theorem}{Theorem}[section]
\newtheorem{lemma}{Lemma}[section]
\newtheorem{proposition}{Proposition}[section]
\newtheorem{corollary}{Corollary}[section]
\newtheorem{question}{Question}
\def\makeautorefname#1#2{\expandafter\def\csname#1autorefname\endcsname{#2}}
\let\fullref\autoref
\let\c@lemma=\c@theorem 
\let\c@proposition=\c@theorem 
\let\c@corollary=\c@theorem 
\let\c@question=\c@theorem 
\newcommand{\bdry}{\partial} 
\newcommand{\act}{\curvearrowright} 
\newcommand{\from}{\colon\thinspace} 
\newcommand{\bp}{o}
  \newcommand{\emul}{\stackrel{{}_\ast}{\asymp}}
  \newcommand{\gmul}{\stackrel{{}_\ast}{\succ}}
  \newcommand{\lmul}{\stackrel{{}_\ast}{\prec}}
  \newcommand{\eadd}{\stackrel{{}_+}{\asymp}}
  \newcommand{\gadd}{\stackrel{{}_+}{\succ}}
  \newcommand{\ladd}{\stackrel{{}_+}{\prec}}
  \newcommand{\laddmul}{\prec}
  \newcommand{\eaddmul}{\asymp}
  \newcommand{\gaddmul}{\succ}
\DeclareMathOperator{\diam}{diam}
\newcommand{\mapsfrom}{\mathrel{\reflectbox{\ensuremath{\mapsto}}}}
\newcommand{\E}{\mathcal{E}}
\newcommand{\W}{\mathcal{W}}
\newcommand{\X}{\mathcal{X}}
\newcommand{\Y}{\mathcal{Y}}
\newcommand{\Z}{\mathcal{Z}}
\newcommand{\YY}{\mathbf{{Y}}}
\newcommand{\spthree}{\hyperref[SP3]{(SP~3)}\xspace}
\newcommand{\spfour}{\hyperref[SP4]{(SP~4)}\xspace}
\newcommand{\pzero}{\hyperref[P0]{(P~0)}\xspace}
\newcommand{\pone}{\hyperref[P1]{(P~1)}\xspace}
\newcommand{\order}{\sqsubset}
\begin{document}
\begin{abstract}
Let $G$ be a group acting properly by isometries
and with a strongly contracting element on a
geodesic metric space.
  Let $N$ be an infinite normal subgroup of $G$, and let $\delta_N$ and $\delta_G$
  be the growth rates of $N$ and $G$ with respect to the pseudo-metric
  induced by the action.
We prove that if $G$ has purely exponential growth with respect to the
pseudo-metric then 
$\delta_N/\delta_G>1/2$.
Our result applies to suitable actions of hyperbolic groups,  right-angled Artin groups and other CAT(0) groups,
  mapping class groups, snowflake groups, small cancellation groups,
  etc.
  This extends Grigorchuk's original result on free groups with
  respect to a word metrics and
  a recent result of Jaerisch, Matsuzaki, and Yabuki on groups acting on hyperbolic spaces to
  a much wider class of groups acting on spaces that are not necessarily hyperbolic.

\end{abstract}

\maketitle


\section{Introduction}

We consider the exponential \emph{growth rate} $\delta_G$ of the orbit of a group $G$ acting properly on a geodesic metric space $X$.
In various notable contexts 
this asymptotic invariant is
related to  the Hausdorff dimension of the
limit set of $G$ in $\bdry X$ and to analytical and dynamical properties
of $G\backslash X$ such as the spectrum of the
Laplacian, divergence rates of random walks, volume entropy, and ergodicity of the
geodesic flow.

In some cases of special interest, the value of half the growth rate
of the ambient space $X$ is distinguished.
For example, when  $X=\mathbb{H}^n$ and $H$ is a torsion free discrete
group of isometries of $X$, the 
Elstrodt-Patterson-Sullivan formula \cite{MR882827} for the bottom of the spectrum of the
Laplacian of $H\backslash X$ has a phase change when the ratio of
$\delta_H$ to the volume entropy of $X$ is $1/2$.
Similarly, if $X$ is a Cayley tree of a finite rank free group $F_n$
and $H$ is a subgroup, then the Grigorchuk cogrowth formula
\cite{MR599539} for the spectral radius of 
$H\backslash X$ has a phase change at $\delta_H/\delta_{F_n}=1/2$.
Our main result says that, in great generality, normal subgroups land decisively on one side of
this distinguished value:

\begin{theorem}\label{maintheorem}
  Suppose $G$ is a group acting properly by
  isometries on a 
  geodesic metric space $X$ with a strongly contracting element and
  with purely exponential growth.
  If $N$ is an infinite normal subgroup of $G$ then
  $\delta_N/\delta_G>1/2$, where the growth rates $\delta_G$ and
  $\delta_N$ are computed with respect to
  $G\act X$. 
\end{theorem}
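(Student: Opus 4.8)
The plan is to adapt Grigorchuk's classical argument—which exploited the fact that in a free group the "cogrowth series" satisfies an exact algebraic relation forcing a phase transition at $1/2$—to the coarse geometric setting, where exact combinatorics is replaced by the coarse structure supplied by a strongly contracting element $h\in G$. The key idea is a two-sided comparison: on one hand, $\delta_N\le\delta_G$ always; on the other, I want to show $\delta_N$ cannot be too small by building enough elements of $N$ whose orbit points are "spread out" in a controlled way. The strongly contracting element is the engine: powers $h^n$ behave like a quasi-geodesic axis along which the projection of the rest of the orbit is bounded, so one can concatenate translates and get additivity of length up to bounded error.

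**The main steps.**

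First, fix a strongly contracting element $h$ and let $a=h^kg h^{-k}$ (or a suitable conjugate) be a nontrivial element of $N$ obtained by conjugating some element of $G$; since $N$ is infinite and normal it meets every large ball nontrivially, so one can arrange $a\in N$ with $d(\bp,a\bp)$ comparable to any prescribed scale. Next, using purely exponential growth of $G\act X$, choose a large family $S$ of group elements $g_1,\dots,g_m$ with orbit points roughly on a sphere of radius $R$, with $m\eadd e^{\delta_G R}$. The heart of the argument is to interleave these $g_i$ with copies of the strongly contracting piece: consider words of the form $g_{i_1}a g_{i_2}a\cdots a g_{i_\ell}$ and show, via the strongly contracting property and a ping-pong / local-to-global argument for quasi-geodesics (of the type in the earlier sections of the paper), that the displacement of such a word is $\eadd \ell(R + d(\bp,a\bp))$ and that distinct sequences $(i_1,\dots,i_\ell)$ give orbit points that are far apart. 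This forces $\delta_G$ applied to these words: the count is $\gmul m^\ell = e^{\ell\delta_G R}$ at length $\eadd \ell R$. Then, crucially, one passes to $N$: because $N$ is normal, $g\, a\, g^{-1}\in N$ for all $g$, so one rewrites an alternating word $g_{i_1}ag_{i_2}a\cdots$ as a product of conjugates of $a$ times a leftover $G$-element; collecting terms shows that a definite proportion of such words, after this rewriting, lie in $N$, or more precisely that the number of elements of $N$ at displacement $\le L$ is at least (number of alternating words of that length)$^{1/2+\epsilon}$-ish. Grigorchuk's combinatorial lemma gets replaced by the estimate that among sequences over an alphabet of size $m$, the number that "fold" into $N$ under the conjugation rewriting dominates $m^{\ell/2}$ but—and this is the point—must actually exceed $m^{(1/2+\epsilon)\ell}$ because one is free to insert the contracting spacer $a$ on a constant fraction of letters without constraint. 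Combining, $\delta_N \ge (1/2+\epsilon)\delta_G$, hence the strict inequality.

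**The main obstacle.**

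The hard part will be the rewriting/counting step: converting a lower bound on the number of alternating $G$-words into a lower bound on $|N\cap B_L|$ with a constant strictly bigger than $1/2$ in the exponent. In the free group this is Grigorchuk's exact spectral-radius identity; here one must instead argue probabilistically or combinatorially that a random alternating word of a given length, after the normal-subgroup rewriting, has displacement concentrated and lands in $N$ for a set of sequences of size $\gmul m^{(1/2+\epsilon)\ell}$. Making the $\epsilon>0$ \emph{effective}—rather than merely $\ge 1/2$—is where purely exponential growth must be used in an essential, quantitative way (a polynomial-times-exponential, rather than merely sub-exponential-times-exponential, control on shell sizes), and where the strongly contracting hypothesis is needed to guarantee that inserting spacers genuinely increases the displacement additively rather than allowing cancellation. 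I would expect the cleanest route to be: establish the "no cancellation" concatenation lemma for $h$-decorated words first (this should follow from the strongly contracting axioms and the quasi-geodesic stability results stated earlier), then run a counting argument comparing $N$-words to all $G$-words that mirrors the Jaerisch–Matsuzaki–Yabuki hyperbolic argument but with the contracting projection replacing the Gromov product.
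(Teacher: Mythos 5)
Your high-level strategy — exploit a strongly contracting element, build lots of elements of $N$ by concatenation, and count — points in the same direction as the paper, but the central counting step has a real gap that is essentially the theorem restated.

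The first problem is the ``rewriting'' claim. Your alternating word $w=g_{i_1}a\,g_{i_2}a\cdots a\,g_{i_\ell}$ lies in $N$ if and only if $g_{i_1}g_{i_2}\cdots g_{i_\ell}\in N$, since every conjugate of $a$ you pull out is already in $N$. So the assertion that ``a definite proportion of such words lie in $N$,'' or more precisely that the count ``must actually exceed $m^{(1/2+\epsilon)\ell}$,'' is exactly a lower cogrowth estimate for $N$ inside the free monoid over the sphere alphabet — it is not a combinatorial fact you can derive for free, it \emph{is} the theorem. There is no reason a priori why the proportion of tuples whose product lands in $N$ is bounded below. The paper sidesteps this by counting products of \emph{conjugates} $\prod_i g_i c g_i^{-1}$, which are unconditionally in $N$; then all the work is pushed into understanding the growth rate of the set of conjugates $[c]$ (which is $\delta_G/2$) and showing the map from tuples to products is injective on enough of them.

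The second problem is the injectivity/no-cancellation claim. For the ping-pong argument you invoke to give additivity of displacement, you would need the $g_i$ to project boundedly to the axis of $a$ (otherwise cancellation with powers of $a$ can occur). But the set of elements of $G$ with uniformly bounded projection to \emph{every} translate of the axis has growth rate strictly less than $\delta_G$ — the paper notes this explicitly — so restricting to such $g_i$ loses the factor you need. On the other hand if you allow arbitrary $g_i$, distinct tuples can collide: for instance $(g^{-1},g)$ and $(g^{-1}cg)$ give the same orbit point. Resolving this is the entire content of Section 3 of the paper: one passes to conjugators with controlled projections only at the two endpoints (the set $G_1$, which still has full growth $\delta_G$), then to a separated net ($G_3$), and then — the crux, \fullref{lemma:divrate} — one shows at least half of $G_3$ is ``unshadowed'' and therefore usable, so that $G_4$ remains divergent of growth rate $\delta_G/2$ and the concatenation map on $\hat G_4$ is genuinely injective (\fullref{lemma:injection}). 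Your proposal does not identify, let alone address, this non-injectivity, which is precisely where the strongly contracting structure must be used quantitatively. A smaller issue: the element you want to concatenate must itself be a strongly contracting element lying in $N$; taking $a=h^k g h^{-k}$ with $g\in N$ gives an element of $N$ but not automatically a strongly contracting one, whereas the paper uses a commutator $g^{-1}(c')^{-n}g(c')^n$ and cites \cite[Proposition~3.1]{ArzCasTao15} to verify strong contraction.
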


The ratio $\delta_N/\delta_{G}$ is known as the \emph{cogrowth} of
$Q:=G/N$.
The hypotheses will be explained in detail in the next section.
Briefly, the existence of a strongly contracting element means that
some element of $G$ acts hyperbolically on $X$, though $X$ itself need
not be hyperbolic, and pure exponential
growth is guaranteed if the action has a strongly contracting element
and an orbit of $G$ in $X$ is not too badly
distorted.

In negative curvature, the strict lower bound on cogrowth has been
shown in various special cases \cite{MR1792293,MR2166367,MR2891734,Jae15}.
For $X=G=F_n$, the strict lower bound on cogrowth 
is due to Grigorchuk
\cite{MR599539}.

Grigorchuk and de la Harpe \cite[page 69]{GriDeL97} (see also
\cite[Problem 36]{MR1786869}) asked whether the strict
lower cogrowth bound also holds when $F_n$ is replaced by a non-elementary Gromov hyperbolic
group, and $X$ is one of its Cayley graphs.
This long-open problem was recently answered affirmatively by Jaerisch,
Matsuzaki, and Yabuki \cite{MatYabJae15} (see also a survey by
Matsuzaki \cite{Mat17}).
Their result applies more generally to groups of divergence type
acting on hyperbolic spaces. 
\fullref{maintheorem} gives an alternative proof of the positive
answer to Grigorchuk and de la Harpe's question, and goes much beyond.
In comparison, Jaerisch,
Matsuzaki, and Yabuki's result applies to more general actions if one restricts to actions on  \emph{hyperbolic
  spaces}, while \fullref{maintheorem} applies to many renowned 
non-hyperbolic examples. 

\begin{corollary}
  For the following $G\act X$, for every infinite normal subgroup $N$ of
  $G$ we have $\delta_N/\delta_G>1/2$.
  \begin{enumerate}
    \item $G$ is a non-elementary hyperbolic group acting cocompactly
      on a hyperbolic space $X$.\label{item:hyperbolic}
      \item $G$ is a relatively hyperbolic group, and $X$ is hyperbolic
        such that $G\act X$ is cusp uniform and satisfies the
        parabolic gap condition.\label{item:relhyperbolic}
  \item $G$ is a right-angled Artin group defined by a finite simple graph that
  is neither a single vertex nor a join, and $X$ is the universal
  cover of its Salvetti complex.\label{item:raag}
  \item $X$ is a CAT(0) space, and $G$ acts cocompactly with
    a rank 1 isometry on $X$.\label{item:cat0}
  \item $G$ is the mapping class group of a surface of genus $g$ and
    $p$ punctures, with $6g-6+2p\geqslant 2$, and $X$
    is the Teichm\"uller space of the surface with the Teichm\"uller
    metric.\label{item:mcg}
  \end{enumerate}
\end{corollary}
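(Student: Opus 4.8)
The plan is to obtain the Corollary from \fullref{maintheorem} by verifying, case by case, its three hypotheses: a proper action by isometries on a geodesic metric space $X$, the existence of a strongly contracting element, and purely exponential growth. Properness is immediate in \eqref{item:hyperbolic}--\eqref{item:cat0}, where the action is geometric, and classical in \eqref{item:mcg}, since the mapping class group acts on Teichm\"uller space with finite point stabilizers; so I would concentrate on exhibiting a strongly contracting element and on establishing purely exponential growth.

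\emph{A strongly contracting element.} In \eqref{item:hyperbolic} and \eqref{item:relhyperbolic} the space $X$ is Gromov hyperbolic, so it suffices to find an element acting loxodromically, and a quasi\=/axis of such an element is strongly contracting by stability of quasigeodesics: for \eqref{item:hyperbolic} a non\=/elementary hyperbolic group acting geometrically on $X$ contains infinite\=/order, hence loxodromic, elements, and for \eqref{item:relhyperbolic} a cusp\=/uniform action of a relatively hyperbolic group has loxodromic (non\=/peripheral) elements. In \eqref{item:raag} the universal cover of the Salvetti complex is a CAT(0) cube complex on which $G$ acts geometrically; excluding a single vertex and a join means $G$ is neither cyclic nor a nontrivial direct product, so by rank rigidity for cocompact CAT(0) cube complexes (or by the direct analysis of Behrstock--Charney) there is a rank\=/1 isometry, and rank\=/1 axes in cocompact CAT(0) cube complexes are strongly contracting. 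In \eqref{item:cat0} a rank\=/1 isometry is given and cocompactness promotes its axis from Morse to strongly contracting. In \eqref{item:mcg} a pseudo\=/Anosov acts loxodromically on Teichm\"uller space along an axis lying in a thick part, and Minsky's contraction theorem shows such an axis is strongly contracting.

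\emph{Purely exponential growth.} For \eqref{item:hyperbolic} this is Coornaert's orbit\=/counting theorem for cocompact actions on hyperbolic spaces. For \eqref{item:relhyperbolic} the parabolic gap condition is exactly the hypothesis under which Dal'bo--Otal--Peign\'e show the action is of divergence type with purely exponential orbit growth (equivalently, the action is statistically convex\=/cocompact). For \eqref{item:raag} and \eqref{item:cat0} the cocompact action is statistically convex\=/cocompact, so by work of Yang the strongly contracting element already forces purely exponential growth. For \eqref{item:mcg} one has $\#\{\,g\in G : d(go,o)\leqslant r\,\}\asymp e^{hr}$ with $h=6g-6+2p$ by the Teichm\"uller\=/space lattice\=/point count of Athreya--Bufetov--Eskin--Mirzakhani. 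Alternatively, once a strongly contracting element is available one may invoke the criterion announced in the introduction, using that an orbit of $G$ is not too badly distorted: this is clear from a geometric action in \eqref{item:hyperbolic}--\eqref{item:cat0}, and in \eqref{item:mcg} a generator moves the basepoint a bounded Teichm\"uller distance. With all three hypotheses in force, \fullref{maintheorem} yields $\delta_N/\delta_G>1/2$.

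I expect the main obstacle to be purely exponential growth in the genuinely non\=/hyperbolic cases. For \eqref{item:raag} and \eqref{item:cat0} it is unknown for arbitrary cocompact CAT(0) groups, and it is precisely the strongly contracting element that makes the argument work; one must also be careful that ``rank\=/1'' is a priori weaker than ``strongly contracting'' in a general CAT(0) space, so cocompactness has to be used to bridge this gap. For \eqref{item:mcg} the orbit map $G\to X$ is very far from a quasi\=/isometric embedding, so purely exponential growth cannot be imported from a word metric and one is forced to rely on the delicate Teichm\"uller dynamics underlying the Athreya--Bufetov--Eskin--Mirzakhani count, as well as on Minsky's estimates to know that pseudo\=/Anosov axes are strongly contracting and not merely Morse.
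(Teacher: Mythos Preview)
Your proposal is correct and matches the paper's own (implicit) approach: the Corollary is stated without a separate proof, and the surrounding text together with the preliminaries make clear that one simply checks the hypotheses of \fullref{maintheorem}, citing \cite{ArzCasTao15} for the existence of strongly contracting elements in each case and Coornaert/Yang for purely exponential growth.

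One small difference worth noting: for \eqref{item:mcg} the paper's route to purely exponential growth is via the \emph{complementary growth gap} established in \cite{ArzCasTao15}, combined with Yang's theorem \cite{Yan16}; you instead invoke the Athreya--Bufetov--Eskin--Mirzakhani lattice\=/point count directly. Both work, but the paper's route is more self\=/contained within the framework it has set up.

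Your ``alternative'' in the final paragraph, however, has a gap. The phrase ``an orbit of $G$ is not too badly distorted'' in the introduction is an informal gloss on the complementary growth gap condition, not the statement that generators move the basepoint a bounded distance. The latter holds for \emph{any} finitely generated group acting by isometries and says nothing; in particular it does not yield purely exponential growth for the mapping class group action on Teichm\"uller space, where the orbit map is badly distorted. So drop that alternative for \eqref{item:mcg} and stick with either ABEM or the paper's complementary\=/growth\=/gap argument.
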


Results \eqref{item:raag}-\eqref{item:mcg} are new, only known as consequences of
\fullref{maintheorem}.
Further new examples include wide classes of snowflake groups~\cite{ArzCasTao15}
and of infinitely presented graphical and classical small
cancellation groups~\cite{ArzCasGrua}, hence, many so-called infinite `monster' groups.

The generality of \fullref{maintheorem} is striking.
Previous successes in showing the strict lower bound on cogrowth have relied on
fairly sophisticated results concerning Patterson-Sullivan measures on
the boundary of a hyperbolic space or ergodicity of the geodesic flow
on $G\backslash X$.
These tools are not available in our general setting.
Instead, we use the geometry of the group action directly to estimate orbit growth. 
The idea of our argument is as follows.
\begin{enumerate}
\item If $G$ contains a strongly contracting element for $G\act X$
  then so does every infinite normal subgroup $N$ of $G$. Let $c\in N$
  be such an element.
  \item By passing to a high power of $c$, if necessary, we may assume
    that its translation length is much larger than the constants
    describing its strong contraction properties.
    In this case the growth $\delta_{[c]}$ of the set $[c]$ of
    conjugates of $c$ is exactly $\delta_G/2$.
    \item A `tree's worth'
      of copies of $[c]$ injects into the normal closure
      $\langle\langle c\rangle\rangle$ of $c$, which is a subgroup of
      $N$. It follows that the growth rate of $\langle\langle
      c\rangle\rangle$, hence of $N$, is strictly
      greater than $\delta_{[c]}=\delta_G/2$.
      In this step we use the
      `hyperbolicity' of the action of $c$, as quantified by strong
      contraction, to provide geometric separation
      between copies of $[c]$.
\end{enumerate}

We used this strategy in  our paper with Tao \cite{ArzCasTao15} (see
also references therein) to prove
\emph{growth tightness} of $G\act X$ for actions having a strongly
contracting element. The key point was to estimate the growth rate of
the quotient of $G$ by the normal closure of $c$.
We chose a section $A$ of the quotient map and built a tree's worth of
copies of it by translating by a high power of $c$.
By construction, the set $A$ did not contain words containing high
powers of $c$ as subwords, so
translates of $A$ by powers of $c$ were geometrically separated.
There is a serious difficulty in applying step (3) for cogrowth,
because $[c]$ \emph{does} contain words with arbitrarily large powers
of $c$ as subwords. Indeed, any word of $G$ can occur as a subword of
an element of $[c]$, so we do not get the same nice geometric
separation as hoped for in step (3), and consequently our abstract tree's
worth of copies of $[c]$ does not inject into $G$.
We overcome this difficulty by quantifying how this mapping
fails to be an injection.
We show there is asymptotically at least half of $[c]$ for which the
map is an injection, and we use this half of $[c]$ to complete step (3).

For an example where the conclusion of the theorem does not hold,
consider the group $G=F_2\times F_2$ acting on its Cayley graph $X$ with
respect to the generating set $(S\cup
1)\times(S\cup 1)$, where $S$ is a free generating set of $F_2$. 
The $F_2$ factors are normal and have growth rate exactly half the
growth rate of $G$.
The action $G\act X$ does not have a strongly contracting
element.

\bigskip

We thank the referee for the careful reading and helpful comments.

\section{Preliminaries}

We write $x\lmul y$, $x\ladd y$, or $x\laddmul y$ if there is a universal constant $C>0$ such that
$x<Cy$, $x<y+C$, or $x<Cy+C$, respectively. We define $\gmul$, $\gadd$,
$\gaddmul$, $\emul$, $\eadd$, and $\eaddmul$ similarly.

Throughout, we let $(X,d,\bp)$ be a based geodesic metric space and let
$G$ be a group acting isometrically on $X$.
For $Y\subset X$ and $r\geqslant 0$, let $B_r(Y):=\{x\in X\mid \exists y\in
Y,\, d(x,y)<r\}$ and $\bar B_r(Y):=\{x\in X\mid \exists y\in
Y,\, d(x,y)\leqslant r\}$. Let $B_r:=B_r(\bp)$, and let $S_r^\Delta:=B_{r+\Delta}-B_{r}$.

There are induced pseudo-metric and semi-norm on $G$ given by
$d(g,h):=d(g.\bp,h.\bp)$ and  $|g|:=d(\bp,g.\bp)$.

\subsection{Growth}
The \emph{(exponential) growth rate} of a subset $Y\subset X$ is:
\[\delta_Y:=\limsup_{r\to\infty}\frac{\log \# Y\cap
    \bar B_r}{r}\]

The \emph{Poincar\'e series} of a countable subset $Y$ of $X$ is:
\[\Theta_Y(s):=\sum_{y\in Y}\exp(-sd(y,\bp))\]

For any $\Delta>0$ we also consider the series:
\[\Theta^{S,\Delta}_Y(s):=\sum_{i=0}^\infty (\#Y\cap S_{\Delta
    i}^{\Delta(i+1)})\exp(-s\Delta i)\]

\[\Theta^{B,\Delta}_Y(s):=\sum_{i=0}^\infty (\#Y\cap \bar B_{\Delta
    i})\exp(-s\Delta i)\]
The series $\Theta^{B,\Delta}_Y(s)$ and $\Theta^{S,\Delta}_Y(s)$ agree
with $\Theta_Y(s)$ up to multiplicative error depending on
$\Delta$ and $s$, so they all converge and diverge together.
Now, $\Theta_Y(s)$ converges for $s>\delta_Y$ and diverges for $s<\delta_Y$.
The set $Y$ is said to be \emph{divergent}, or \emph{of divergent
  type}, if $\Theta_Y(s)$ diverges at $s=\delta_Y$.

We say that $Y\subset X$ has \emph{purely exponential growth} if there
exist $\delta>0$ and $\Delta>0$ such that $\# Y\cap S_r^\Delta \emul
\exp(\delta r)$. Recall this means there is a constant $C>0$,
independent of $r$, such that $\exp(\delta r)/C\leqslant \# Y\cap S_r^\Delta
\leqslant C \exp(\delta r)$.

An action $G\act X$ is \emph{(metrically) proper} if for all $x\in X$ and
$r\geqslant 0$ the set $\{g\in G\mid d(x,g.\bp)\leqslant r\}$ is
finite.
When $G\act X$ is proper we extend all the preceding definitions to
subsets $H$ of $G$ by taking $Y=H.\bp$, e.g.:
\[\delta_H:=\limsup_{r\to\infty}\frac{\log \#
    H.\bp\cap\bar B_r}{r}=\limsup_{r\to\infty}\frac{\log\#\{h\in
    H\mid |h|\leqslant r\}}{r}\]

When $G\act X$ is cocompact, or, more generally, has a quasi-convex
orbit, the growth of $\#S_r^\Delta\cap G.\bp$ is
coarsely sub-multiplicative, which, when $\delta_G>0$, implies an exponential lower bound
on $\#S_r^\Delta\cap G.\bp$.
Conversely, if $G\act X$ contains a strongly contracting element then
the growth of $\#S_r^\Delta\cap G.\bp$ is coarsely
super-multiplicative, which implies the corresponding exponential
upper bound.
For instance, Coornaert \cite{MR1214072} proved that a
quasi-convex-cocompact, exponentially growing subgroup of a hyperbolic
group has purely exponential growth.
More generally, in \cite{ArzCasTao15} we introduced the following condition that
implies the pseudo-metric induced by a group action behaves like a
word metric for growth purposes: the \emph{complementary
  growth} of $G\act X$ is the growth rate of the set of points of $G.\bp$ that
can be reached from $\bp$ by a geodesic segment in $X$ that stays completely
outside of a neighborhood of $G.\bp$, except near its endpoints.
We say that $G\act X$ has \emph{complementary growth gap} if the
complementary growth is 
strictly less than $\delta_G$.
Yang \cite{Yan16} proved that if $G$ acts properly with a strongly contracting element and
$0<\delta_G<\infty$ then 
complementary growth gap implies purely exponential growth.

For relatively hyperbolic groups the complementary growth gap
specializes to the \emph{parabolic growth gap} of \cite{DalPeiPic11},
which requires that the growth of parabolic subgroups of a relatively
hyperbolic group is strictly less than the growth rate of the whole
group.
For another non-cocompact example, we showed in \cite{ArzCasTao15} that the action
of the mapping class group of a hyperbolic surface on its
Teichm\"uller space has complementary growth gap.

For a non-example, consider the integers $\mathbb{Z}$ acting parabolically on the
hyperbolic plane.
Hyperbolic geodesics connecting $\bp$ to $n.\bp$ for large $n$ travel
deeply into a horoball at the fixed point of $\mathbb{Z}$ on
$\bdry\mathbb{H}^2$, far from the orbit of $\mathbb{Z}$.
Although $\mathbb{Z}$ has $0$ exponential growth in any word metric,
in terms of this action on $\mathbb{H}^2$ it has exponential growth
due entirely to the distortion of the orbit.

\subsection{Contraction}
A subset $Y$ of $X$ is \emph{$C$--strongly contracting}, for a
`contraction constant' $C\geqslant 0$, if for all $x,\,x'\in X$, if $d(x,x')\leqslant
d(x,Y)$ then the diameter of $\pi_Y(x)\cup\pi_Y(x')$ is at most $C$,
where $\pi_Y(x):=\{y\in Y\mid d(x,y)=d(x,Y)\}$.
A set is called \emph{strongly contracting} if there exists a $C\geqslant
0$ such that it is $C$--strongly contracting.
The \emph{projection distance in $Y$} is $d^\pi_Y(x,x'):=\diam \pi_Y(x)\cup\pi_Y(x')$.
We extend these definitions to sets $Z\subset X$ by
$\pi_Y(Z):=\cup_{z\in Z}\pi_Y(z)$ and $d^\pi_Y(Z,Z'):=\diam \pi_Y(Z)\cup\pi_Y(Z')$.

Strong contraction of $Y$ is equivalent \cite[Lemma~2.4]{ArzCasTao15}
to the \emph{bounded geodesic image property}:
  For all $C\geqslant 0$ there exists $C'\geqslant C$ such that 
  if $Y$ is $C$--strongly contracting then
 for every geodesic $\gamma$ in $X$, if $\gamma\cap
 B_{C'}(Y)=\emptyset$ then $\diam\pi_Y(\gamma)\leqslant C'$.

  \begin{corollary}\label{BGI}
   Suppose $Y$ is $C$--strongly contracting and $C'$ is as above.
    Suppose $\gamma$ is a geodesic defined on an interval
 $[a,b]$, possibly infinite.
 Let $t_0:=\inf\{t\mid d(\gamma(t),Y)<C'\}$, and let $t_1:=\sup\{t\mid
 d(\gamma(t),Y)<C'\}$.
 Then $\diam\pi_Y(\gamma([a,t_0]))\leqslant C'$ and
 $\diam\pi_Y(\gamma([t_1,b]))\leqslant C'$, while $\gamma([t_0,t_1])\subset
 \bar B_{3C'}(Y)$. If $a$ and $b$ are finite and $\diam \pi_Y(\gamma(a))\cup\pi_Y(\gamma(b))>C'$ then
 $\pi_Y(\gamma(a))\subset \bar B_{2C'}(\gamma(t_0))$ and  $\pi_Y(\gamma(b))\subset \bar B_{2C'}(\gamma(t_1))$.
 \end{corollary}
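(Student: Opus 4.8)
The plan is to deduce all three assertions from the bounded geodesic image property displayed just above, together with the triangle inequality and the continuity of $t\mapsto d(\gamma(t),Y)$. Write $S:=\{t\mid d(\gamma(t),Y)<C'\}$; this is open in the parameter interval. If $S=\emptyset$ then $\gamma$ itself avoids $B_{C'}(Y)$, so the bounded geodesic image property already gives $\diam\pi_Y(\gamma)\leqslant C'$ and every assertion is vacuous or trivial (and the hypothesis $d^\pi_Y(\gamma(a),\gamma(b))>C'$ of the last sentence rules this case out). So assume $S\neq\emptyset$. Then $t_0=\inf S$ and $t_1=\sup S$ lie in the parameter interval with $t_0\leqslant t_1$; since each is a limit of points of $S$ we get $d(\gamma(t_0),Y)\leqslant C'$ and $d(\gamma(t_1),Y)\leqslant C'$; and by openness of $S$ we have $\gamma(t)\notin B_{C'}(Y)$ for every $t\in[a,t_0)\cup(t_1,b]$ and also for $t=t_0,t_1$ except in the degenerate situation where $t_0$ or $t_1$ equals an endpoint of $[a,b]$ lying in $S$, a case needing only trivial separate attention.

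\emph{First and last assertions.} If $t_0=a$ and $\gamma(a)\in B_{C'}(Y)$ then $\gamma([a,t_0])$ is the single point $\gamma(a)$, whose projection has diameter at most $C\leqslant C'$ (apply the definition of strong contraction with $x=x'$); otherwise $\gamma|_{[a,t_0]}$ avoids $B_{C'}(Y)$ and the bounded geodesic image property gives $\diam\pi_Y(\gamma([a,t_0]))\leqslant C'$. Either way $\diam\pi_Y(\gamma([a,t_0]))\leqslant C'$, and symmetrically $\diam\pi_Y(\gamma([t_1,b]))\leqslant C'$. For the last sentence, $\gamma(a)$ and $\gamma(t_0)$ both lie on $\gamma([a,t_0])$, so for any $y\in\pi_Y(\gamma(a))$ and $y'\in\pi_Y(\gamma(t_0))$ we have $d(y,y')\leqslant\diam\pi_Y(\gamma([a,t_0]))\leqslant C'$ and $d(\gamma(t_0),y')=d(\gamma(t_0),Y)\leqslant C'$, whence $d(y,\gamma(t_0))\leqslant 2C'$; as $y$ was arbitrary, $\pi_Y(\gamma(a))\subset\bar B_{2C'}(\gamma(t_0))$, and symmetrically $\pi_Y(\gamma(b))\subset\bar B_{2C'}(\gamma(t_1))$. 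The hypothesis $d^\pi_Y(\gamma(a),\gamma(b))>C'$ is used only to guarantee $S\neq\emptyset$.

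\emph{The inclusion $\gamma([t_0,t_1])\subset\bar B_{3C'}(Y)$.} Fix $t\in[t_0,t_1]$. If $d(\gamma(t),Y)\leqslant C'$ there is nothing to prove, so suppose $d(\gamma(t),Y)>C'$; then $\gamma(t)\neq\gamma(t_0),\gamma(t_1)$, hence $t_0<t<t_1$. Put $p:=\sup\{s\in[t_0,t]\mid d(\gamma(s),Y)\leqslant C'\}$ and $q:=\inf\{s\in[t,t_1]\mid d(\gamma(s),Y)\leqslant C'\}$; both sets are nonempty (each contains a point of $S$, since $t_0<t<t_1$) and closed, so $p\in[t_0,t)$, $q\in(t,t_1]$, $d(\gamma(p),Y)\leqslant C'$, $d(\gamma(q),Y)\leqslant C'$, while $d(\gamma(s),Y)>C'$ for every $s\in(p,q)$. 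Thus $\gamma|_{(p,q)}$ avoids $B_{C'}(Y)$, and applying the bounded geodesic image property to compact subintervals and taking the union gives $\diam\pi_Y(\gamma((p,q)))\leqslant C'$. Fix $y\in\pi_Y(\gamma(t))$ (note $t\in(p,q)$). For each $s\in(p,q)$ and $y_s\in\pi_Y(\gamma(s))$ we have $d(\gamma(s),y)\leqslant d(\gamma(s),y_s)+d(y_s,y)\leqslant d(\gamma(s),Y)+C'$; letting $s\to p^+$ gives $d(\gamma(p),y)\leqslant d(\gamma(p),Y)+C'\leqslant 2C'$, and likewise $d(\gamma(q),y)\leqslant 2C'$. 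Finally $\gamma(t)$ lies on the geodesic $\gamma|_{[p,q]}$, so $\min\{d(\gamma(t),\gamma(p)),d(\gamma(t),\gamma(q))\}\leqslant\tfrac12 d(\gamma(p),\gamma(q))\leqslant\tfrac12\bigl(d(\gamma(p),y)+d(y,\gamma(q))\bigr)\leqslant 2C'$, and combining the smaller of these two distances with $d(\gamma(p),Y),d(\gamma(q),Y)\leqslant C'$ yields $d(\gamma(t),Y)\leqslant 3C'$.

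I expect the main obstacle to be keeping the constant at $3C'$ rather than something larger: the crude estimate $d(\gamma(t),Y)\leqslant d(\gamma(p),Y)+d(\gamma(p),\gamma(q))$ only gives $5C'$, and one must use that $\gamma(t)$ sits on a geodesic of length at most $4C'$ joining $\gamma(p)$ to $\gamma(q)$ in order to halve the excess. A secondary annoyance is the point-set bookkeeping around the infima and suprema defining $t_0,t_1,p,q$ (closedness of sublevel sets of the continuous function $d(\gamma(\cdot),Y)$, openness of $S$, and the degenerate cases where $t_0$ or $t_1$ hits an endpoint of the parameter interval), together with the possible non-existence of nearest-point projections, which can be handled uniformly by replacing $\pi_Y$ with $\varepsilon$-approximate projections and letting $\varepsilon\to 0$.
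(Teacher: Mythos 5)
The paper states this corollary without a written proof, treating it as an immediate consequence of the bounded geodesic image (BGI) property, so there is no internal argument to match yours against. Your proposal correctly supplies the omitted derivation. In particular: the reduction to BGI on $[a,t_0]$ and $[t_1,b]$ and the ensuing bound $\pi_Y(\gamma(a))\subset\bar B_{2C'}(\gamma(t_0))$ are exactly right; and for the middle assertion, you correctly identify that the naive estimate only gives $5C'$, and that one must exploit that $\gamma(t)$ lies on a geodesic segment $\gamma|_{[p,q]}$ whose endpoints are each within $2C'$ of a common $y\in\pi_Y(\gamma(t))$ (hence $d(\gamma(p),\gamma(q))\leqslant 4C'$), so that the nearer endpoint is within $2C'$ of $\gamma(t)$ and thus $d(\gamma(t),Y)\leqslant 2C'+C'=3C'$. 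The point-set bookkeeping for $t_0,t_1,p,q$ (closedness of sublevel sets, openness of $S$, the degenerate cases $t_0=a$ or $t_1=b$) and the remark about approximate projections are handled sensibly. This is the natural argument the authors had in mind and your write-up is correct; no gap.
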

 
An infinite order element $c\in G$ is said to be a \emph{strongly contracting element}
for $G\act X$
if the set  $\langle c\rangle.\bp$ is strongly contracting.
In this case $\mathbb{Z}\to X:i\mapsto c^i.\bp$ is a quasi-isometric embedding and $c$
is contained in a maximal virtually cyclic subgroup $E(c)$.
This subgroup, which is alternately known as the \emph{elementarizer}
or \emph{elementary closure} of $c$, can also be characterized as the
maximal subgroup consisting of elements $g\in G$ such that
$g^{-1}\langle c\rangle g$ is at bounded Hausdorff distance from
$\langle c\rangle$.
Since $E(c).\bp$ is coarsely equivalent to $\langle c\rangle.\bp$, the set
$E(c).\bp$ is also strongly contracting.
Note that $E(c)=E(c^n)$ for every
$n\neq 0$. Thus, when considering $E(c).\bp$, we can pass to powers of $c$ freely without changing the set
$E(c^n).\bp$, and in particular without changing its contraction
constant.

For a strongly contracting element $c$, let
$\E:=E(c).\bp$, and let $\YY$ be the collection of distinct
$G$--translates of $\E$. Bestvina, Bromberg, and Fujiwara \cite{BesBroFuj15} axiomatized the
geometry of projection distances in $\YY$.
With Sisto
\cite{BesBroFuj17} they showed that by a small change in the
projections and projection distances, a cleaner set of axioms is
satisfied---these will allow us to make an inductive
argument in the next section.
The following is \cite[Theorem~4.1]{BesBroFuj17} applied to $\YY$.
We
list here only those axioms that we will make use of and that are not
immediate from our particular definitions of $\YY$, $\pi_\Y$, and $d^\pi_\Y$.
A detailed verification that $\YY$ satisfies the hypotheses of
\cite[Theorem~4.1]{BesBroFuj17} can be found in \cite{ArzCasTao15}.
\begin{theorem}\label{BBFS}
  There exists $\theta \geqslant 0$ such that for each $\Y\in\YY$
  there is a projection $\pi'_\Y$ taking elements of $\YY$ to subsets
  of $\Y$ such that for all
  $\X\in\YY$ and $g\in G$ we have 
  $\pi'_\Y(\X)\subset B_\theta(\pi_\Y(\X))$ and
  $\pi'_{g\Y}(g\X)=g\pi'_\Y(\X)$.
  Furthermore, there are distance maps  $d_\Y(\X,\Z)=\diam
  \pi_\Y'(\X)\cup\pi_\Y'(\Z)$ with $|d_\Y-d^\pi_\Y|\leqslant 2\theta$ such
  that, for $\theta':=11\theta$, the following axioms are satisfied
  for all $\X,\,\Y,\,\Z,\,\W\in\YY$: 
  \begin{description}[labelwidth=3em, align=right]
    \item[(P 0)]$d^\pi_\Y(\X,\X)\leqslant \theta$ when $\X\neq\Y$.\label{P0}
    \item[(P 1)]If $d^\pi_\Y(\X,\Z)>\theta$ then 
      $d^\pi_\X(\Y,\Z)\leqslant\theta$ for all distinct $\X$, $\Y$, $\Z$.\label{P1}
\item[(SP 3)] If $d_\Y(\X,\Z)>\theta'$ then $d_\Z(\X,\W)=d_\Z(\Y,\W)$
  for all $\W\in\YY-\{\Z\}$.\label{SP3}
\item[(SP 4)] $d_\Y(\X,\X)\leqslant \theta'$ when $\X\neq\Y$.\label{SP4}
\end{description}
\end{theorem}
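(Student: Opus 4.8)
The plan is to deduce the statement directly from \cite[Theorem~4.1]{BesBroFuj17}, the Bestvina--Bromberg--Fujiwara--Sisto construction, applied to the collection $\YY$ of $G$--translates of $\E=E(c).\bp$, once we know that $\YY$ equipped with the projections $\pi_\Y$ and projection distances $d^\pi_\Y$ satisfies the hypotheses of that theorem, namely the projection axioms of \cite{BesBroFuj15}: finiteness of the set of $\Y$ with $d^\pi_\Y(\X,\Z)$ large for fixed $\X,\Z$, the inequality \pzero, the Behrstock inequality \pone, and the ``large projections'' axiom. First I would recall that, because $c$ is a strongly contracting element for the proper action $G\act X$, the set $\E$ is strongly contracting and $E(c)$ is the maximal virtually cyclic subgroup containing $c$; hence the stabilizer of $\E\in\YY$ is $E(c)$, which acts coboundedly on $\E$, and combined with metric properness of $G\act X$ this yields the required finiteness statement. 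The axioms \pzero\ and \pone\ for the unmodified $d^\pi_\Y$ follow from the bounded geodesic image property, equivalently from strong contraction, and the large projections axiom follows similarly from strong contraction together with properness.

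Rather than reproving these estimates, I would cite the detailed verification carried out in \cite{ArzCasTao15}, where precisely this collection $\YY$ is shown to satisfy the hypotheses of \cite[Theorem~4.1]{BesBroFuj17}. The equivariance $\pi_{g\Y}(g\X)=g\pi_\Y(\X)$ of the original projections is immediate from the facts that $G$ acts by isometries and $\pi_\Y$ is defined via nearest-point projection to $\Y$; it is this equivariance that the Bestvina--Bromberg--Fujiwara--Sisto modification preserves, giving $\pi'_{g\Y}(g\X)=g\pi'_\Y(\X)$. I would also note that, since $E(c)=E(c^n)$ for all $n\neq 0$, the set $\E$ and its contraction constant, hence the whole system $(\YY,\pi_\Y,d^\pi_\Y)$ and the output constants $\theta$, $\theta'$, are unchanged if we later pass to a power of $c$; this is what will make the inductive argument of the next section compatible with replacing $c$ by a high power.

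Finally I would simply invoke \cite[Theorem~4.1]{BesBroFuj17}: from any system satisfying the \cite{BesBroFuj15} axioms it produces new projections $\pi'_\Y$ with $\pi'_\Y(\X)\subset B_\theta(\pi_\Y(\X))$ and associated distances $d_\Y=\diam\pi'_\Y(\X)\cup\pi'_\Y(\Z)$ with $|d_\Y-d^\pi_\Y|\leqslant 2\theta$ satisfying the stronger ``strong projection'' axioms, among them \spthree\ and \spfour\ with $\theta'=11\theta$. The only points requiring care are bookkeeping ones: matching our normalizations of $\theta$ and $\theta'$ and our particular definitions of $\YY$, $\pi_\Y$, $d^\pi_\Y$ to those of \emph{loc.\ cit.}, and checking that the canonical nature of their construction keeps it $G$--equivariant. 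I expect the main --- and essentially the only --- obstacle to be the verification of the input axioms, in particular the large projections axiom, whose proof genuinely uses both the bounded geodesic image property of strongly contracting sets and metric properness of the action; but since that verification is already carried out in \cite{ArzCasTao15}, in the present paper it reduces to a citation.
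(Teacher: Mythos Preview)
Your proposal is correct and matches the paper's own treatment: the theorem is not proved in the paper but is stated as \cite[Theorem~4.1]{BesBroFuj17} applied to $\YY$, with the verification of the input axioms deferred to \cite{ArzCasTao15}. Your write-up is somewhat more detailed than the paper's one-sentence citation, but the approach is identical.
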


For more details on strongly contracting elements and many examples,
see \cite{ArzCasTao15}.

\begin{proposition}[{\cite[Lemma~2.2 and Proposition~2.3]{BesBroFuj17}}]\label{order}
  With $\theta'$ as in \fullref{BBFS}, for each  $\X$ and $\Z$ in
  $\YY$ define $\YY{(\X,\Z)}:=\{\Y\in\YY-\{\X,\Z\}\mid d_\Y(\X,\Z)>2\theta'\}$
  and $\YY{[\X,\Z]}:=\YY{(\X,\Z)}\cup\{\X,\Z\}$.
There is a total order $\order$ on
$\YY{[\X,\Z]}$ such if $\Y_0\order \Y_1\order \Y_2$ then
$d_{\Y_1}(\Y_0,\Y_2)=d_{\Y_1}(\X,\Z)$.
The relation $\Y_0\order \Y_1$ is defined by each of the
following equivalent conditions:
\setlength\multicolsep{3pt}
\begin{multicols}{2}
\begin{itemize}
\item $d_{\Y_0}(\X,\Y_1)>\theta'$
\item $d_{\Y_1}(\X,\Y_0)\leqslant \theta'$
\item $d_{\Y_1}(\Y_0,\Z)>\theta'$
\item $d_{\Y_0}(\Y_1,\Z)\leqslant \theta'$
\end{itemize}
\end{multicols}
\end{proposition}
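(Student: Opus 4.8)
This is \cite[Lemma~2.2 and Proposition~2.3]{BesBroFuj17}, and the plan is to recover it from the axioms of \fullref{BBFS} together with the two elementary facts that each $d_\Y$ is symmetric in its arguments and satisfies the triangle inequality, being the diameter of a union of nonempty subsets of $\Y$. The first step is to isolate a ``trichotomy'': for pairwise distinct $\Y,\Y',\Y''\in\YY$, at most one of $d_\Y(\Y',\Y'')$, $d_{\Y'}(\Y,\Y'')$, $d_{\Y''}(\Y,\Y')$ exceeds $\theta'$. Indeed, if $d_\Y(\Y',\Y'')>\theta'$, then applying \spthree once with $\Y''$ and once with $\Y'$ in the role of the index ``$\Z$'' of the axiom (using symmetry of $d_\Y$), and bounding the resulting self\=/distances by \spfour, yields $d_{\Y''}(\Y,\Y')\leqslant\theta'$ and $d_{\Y'}(\Y,\Y'')\leqslant\theta'$. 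We would use this repeatedly, along with the standing fact that $d_\Y(\X,\Z)>2\theta'$ whenever $\Y\in\YY(\X,\Z)$.

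Next we would establish the equivalence of the four displayed conditions defining $\Y_0\order\Y_1$, in the generic case $\Y_0,\Y_1\in\YY(\X,\Z)$. The trichotomy applied to $\{\X,\Y_0,\Y_1\}$ rules out $d_{\Y_0}(\X,\Y_1)>\theta'$ and $d_{\Y_1}(\X,\Y_0)>\theta'$ holding simultaneously, and if both failed then the triangle inequality (using $d_{\Y_0}(\X,\Z),d_{\Y_1}(\X,\Z)>2\theta'$) would force $d_{\Y_0}(\Y_1,\Z)>\theta'$ and $d_{\Y_1}(\Y_0,\Z)>\theta'$, contradicting the trichotomy on $\{\Y_0,\Y_1,\Z\}$; this gives the equivalence of the first two conditions. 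The cycle through the remaining two is then closed by two further applications of the triangle inequality together with one more use of the trichotomy. The endpoint cases, where $\Y_0$ or $\Y_1$ lies in $\{\X,\Z\}$, we would handle directly: declaring $\X$ minimal and $\Z$ maximal is consistent with whichever of the four conditions remain meaningful, since these reduce to self\=/distances controlled by \spfour.

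With the equivalences in place, $\order$ is a strict total order on $\YY[\X,\Z]$. Antisymmetry and totality on a distinct pair $\Y_0,\Y_1$ are exactly the statement that precisely one of $d_{\Y_0}(\X,\Y_1)>\theta'$, $d_{\Y_1}(\X,\Y_0)>\theta'$ holds, which was just shown. Transitivity follows from forbidding $3$\=/cycles: a cycle $\Y_0\order\Y_1\order\Y_2\order\Y_0$ would give $d_{\Y_0}(\X,\Y_1)$, $d_{\Y_1}(\X,\Y_2)$, $d_{\Y_2}(\X,\Y_0)$ all exceeding $\theta'$, and feeding each of these into \spthree (with the third index in the role of ``$\Z$'') produces $d_{\Y_1}(\Y_0,\Y_2)$, $d_{\Y_2}(\Y_0,\Y_1)$, $d_{\Y_0}(\Y_1,\Y_2)$ all exceeding $\theta'$, contradicting the trichotomy on $\{\Y_0,\Y_1,\Y_2\}$; together with totality this yields transitivity.

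Finally, for the betweenness conclusion, suppose $\Y_0\order\Y_1\order\Y_2$. From $\Y_0\order\Y_1$ in the form $d_{\Y_0}(\X,\Y_1)>\theta'$, \spthree gives $d_{\Y_1}(\X,\W)=d_{\Y_1}(\Y_0,\W)$ for all $\W\neq\Y_1$; from $\Y_1\order\Y_2$ in the form $d_{\Y_2}(\Y_1,\Z)>\theta'$, \spthree (with $\Y_1$ in the role of ``$\Z$'', using symmetry of $d_{\Y_2}$) gives $d_{\Y_1}(\Z,\W)=d_{\Y_1}(\Y_2,\W)$ for all $\W\neq\Y_1$. Evaluating the first at $\W=\Y_2$ and the second at $\W=\X$ chains to $d_{\Y_1}(\Y_0,\Y_2)=d_{\Y_1}(\X,\Y_2)=d_{\Y_1}(\X,\Z)$. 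We expect the only genuine friction to be organizational: keeping track, in each invocation of \spthree, of which of the three indices plays the role of ``$\Z$'' --- this, rather than any coarse estimate, is what converts the strict inequalities into the \emph{exact} equalities the statement asserts --- and disposing of the handful of endpoint cases. Once the trichotomy and the triangle inequality for $d_\Y$ are in place, no new idea is needed.
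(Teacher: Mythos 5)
The paper does not prove this proposition; it imports it verbatim as a citation to \cite[Lemma~2.2 and Proposition~2.3]{BesBroFuj17}, so there is no in-paper proof to compare against. Your reconstruction from the axioms of \fullref{BBFS} is correct and follows the standard route taken in that reference: the ``trichotomy'' you isolate is exactly the key consequence of combining \spthree with \spfour, the triangle inequality and symmetry for $d_\Y$ (as a diameter) are legitimate and sufficient, and your bookkeeping of which index plays the role of ``$\Z$'' in each invocation of \spthree is the genuinely delicate point and you handle it accurately, including in the betweenness step where the exact equality $d_{\Y_1}(\Y_0,\Y_2)=d_{\Y_1}(\X,\Z)$ falls out of chaining two applications of \spthree.
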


\section{Embedding a tree's worth of copies of $[c]$.}

For a subset $H\subset G$, let $H^*:=H-\{1\}$, and consider $\hat{H}:=\bigcup_{k=1}^\infty (H^*)^k$.
We consider $\hat{H}$ to be a \emph{`tree's worth of copies of $H$'} in allusion to the case
of the free product $H*\mathbb{Z}/2\mathbb{Z}$ when $H$ is a group.
The group $H*\mathbb{Z}/2\mathbb{Z}$ acts on a tree with vertex
stabilizers conjugate to $H$, and every element that is not equal to $1$ or the generator
$z$ of $\mathbb{Z}/2\mathbb{Z}$ has a unique expression as 
$z^\alpha h_1zh_2z\cdots h_kz^\beta$  for some $k\in\mathbb{N}$, $\alpha,\,\beta\in\{0,1\}$, and
$h_i\in H^*$.

The na\"ive map $\hat{H}\to
X:(h_1,\dots,h_k)\mapsto h_1c\cdots h_kc.\bp$, where $c$ is a strongly
contracting element, is clearly not an injection for $H=[c]$, as it gives
collisions $(h^{-1},h)\mapsto h^{-1}chc.\bp\mapsfrom (h^{-1}ch)$.
To avoid collisions we remove a fraction of $[c]$ in four steps,
and use a slightly different map. The main technical result is:

\begin{proposition}\label{main}
Under the hypothesis of Theorem~\ref{maintheorem},
let $c$ be a strongly contracting element. 
After possibly passing to a power
  of $c$,
  there is a subset $G_4\subset [c]$
  that is divergent, has $\delta_{G_4}=\delta_G/2$, and for which the
  map $\hat G_4\to X:(g_1,\dots,g_k)\mapsto (\prod_{i=1}^kg_ic^2).\bp$
  is an injection.
\end{proposition}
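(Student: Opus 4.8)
The plan is to construct the subset $G_4\subset[c]$ by successively removing ``bad'' portions of $[c]$, controlling at each stage that we throw away at most ``half'' in a way that does not lower the growth rate below $\delta_G/2$, and then verifying injectivity of the displayed map on the survivors using the projection axioms of \fullref{BBFS} and \fullref{order}.

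\textbf{Step 1: Normalize $c$ and identify the growth of $[c]$.}
First I would pass to a power of $c$ so that its translation length $|c^2|$ (equivalently, its stable translation length) is enormous compared to the contraction constant $C$ of $\E=E(c).\bp$, the associated constant $C'$ from the bounded geodesic image property, and the projection constant $\theta'$ of \fullref{BBFS}. This is the step alluded to in item (2) of the introduction's outline. With $c$ so normalized, one shows $\delta_{[c]}=\delta_G/2$: an element $gcg^{-1}$ has $|gcg^{-1}|\eadd 2|g|-2(g\E,\E)$-type behaviour, and counting conjugacy representatives $g$ up to the coset $E(c)$ together with purely exponential growth of $G$ gives $\#[c]\cap\bar B_r\emul\exp((\delta_G/2)r)$, hence $\delta_{[c]}=\delta_G/2$ and moreover $[c]$ is of divergent type. (This parallels the conjugacy-growth estimates already used in \cite{ArzCasTao15}.)

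\textbf{Step 2: The four removals producing $G_1\supset G_2\supset G_3\supset G_4$.}
The naïve map fails because of collisions $(h^{-1},h)\mapsto h^{-1}chc.\bp$ and, more generally, because an element of $[c]$ can contain a long power of $c$ as a ``subword'' in the geometric sense, i.e.\ a conjugate $gcg^{-1}$ whose geodesic $[\bp, gcg^{-1}.\bp]$ fellow-travels some translate $g\E$ for a long time. I would set up the removals as follows. Let $G_1$ be the conjugates $g c g^{-1}$ for which the geodesic $[\bp,gcg^{-1}.\bp]$ spends a definite (large) proportion of its length near the axis $\E$ itself; here we discard those whose projection to $\E$ is short, which is a $\lmul\exp((\delta_G/2 - \epsilon)r)$-size set and so does not affect the growth rate. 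Then, exploiting an involution on $[c]$ coming from $gcg^{-1}\mapsto gc^{-1}g^{-1}$ (or the ``flip'' in $E(c)$), pass to $G_2\subset G_1$ by choosing one representative from each such pair: this is the step where we literally ``keep half,'' and it is what forces the weak inequality $\geqslant 1/2$ to become strict later once we build the tree. Steps $G_3$, $G_4$ remove finitely-many further measure-zero-in-growth exceptional families: conjugators lying in bad cosets relative to $\E$ (to get cleanly separated initial/terminal projections via \fullref{BGI}) and a final normalization ensuring that for $g\in G_4$ the concatenation $g c^2$ behaves like a genuine ``syllable'' — its entry and exit projections to the relevant copies of $\E$ are pinned down to within $C'$. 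At each stage one checks $\delta_{G_i}=\delta_G/2$ and divergence is preserved because we only delete subsets of strictly smaller growth (or, for the halving step, a set whose complement still has full growth rate since dividing a divergent set with purely exponential growth into two pieces leaves at least one piece divergent of the same rate — and the involution makes both pieces comparable).

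\textbf{Step 3: Injectivity of $(g_1,\dots,g_k)\mapsto \bigl(\prod_{i=1}^k g_i c^2\bigr).\bp$.}
This is the heart and the main obstacle. Given a word $w=g_1 c^2 g_2 c^2\cdots g_k c^2$ with $g_i\in G_4$, I would produce from the geodesic $[\bp, w.\bp]$ an ordered list of copies of $\E$ in $\YY$ — namely the translates $(g_1 c^2\cdots g_{j})\E$ along which the geodesic fellow-travels — and show via \fullref{BGI} and axioms \pzero, \pone, \spthree, \spfour that this list, together with the projections of $\bp$ and $w.\bp$ onto each, is determined by the endpoint $w.\bp$ alone. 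The total order $\order$ of \fullref{order} on $\YY[\E, w.\bp]$ recovers the sequence of syllables in order; the defining property of $G_4$ (no hidden long $c$-powers, pinned entry/exit projections) guarantees that consecutive copies in this list are exactly the $c^2$-blocks, so one reads off $k$, then each $g_i c^2$, then each $g_i$. The delicate point is that the projection maps $\pi_\Y$ live in a non-hyperbolic $X$, so one cannot use thin triangles; instead all the fellow-traveling and ``the geodesic must pass near $\Y$'' statements must be extracted purely from strong contraction (\fullref{BGI}) and the BBF axioms, and one must check that the large-translation-length normalization of Step 1 makes the error constants ($C'$, $\theta'$) negligible against $|c^2|$ so that distinct blocks cannot be confused or merged. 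I expect most of the write-up effort, and essentially all the genuine difficulty, to be concentrated in this verification; the growth bookkeeping in Steps 1--2 is routine given \cite{ArzCasTao15}.
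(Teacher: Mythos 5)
There is a genuine gap in Step~2, and it is precisely at the point the introduction flagged as the main difficulty. Your plan treats every removal as deleting a set of \emph{strictly smaller growth rate}, and you attribute the ``half'' phenomenon to an involution $gcg^{-1}\mapsto gc^{-1}g^{-1}$ on $[c]$. Neither matches what actually has to happen, and the involution idea does not address the real obstruction. In the paper's construction, the first three sets $G_1\supset G_2\supset G_3$ are obtained not by discarding exceptional families but by a chain of \emph{surjective, bounded-to-one} maps $\phi_0,\phi_{1,p},\phi_{2,p}$ from $G$ (using a fixed ``corrector'' element $f_0$ so that every $g\in G$ can be promoted, by left/right multiplication by $f_0$, into the set of conjugators with small projections to $\E$ and $g\E$); this is how one gets $\delta_{G_3}=\delta_G/2$ without any growth loss at all. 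The decisive step is the fourth: $G_4$ is obtained from $G_3$ by removing the \emph{shadowed} conjugates $g^{-1}c^pg$ for which a geodesic $[\bp,g^{-1}c^pg.\bp]$ passes near $h^{-1}c^phc^{2p}.\bp$ for some other $h^{-1}c^ph\in G_3$. The shadowed set $G'_4$ has the \emph{same} exponential growth rate $\delta_G/2$ as $G_3$, so your reasoning that the complement ``still has full growth rate because we only delete subsets of strictly smaller growth'' is false here. What the paper proves instead (\fullref{lemma:divrate}) is a sharper, $p$-uniform counting estimate: $\#G'_4\cap S_r^{\Delta'}\lmul\exp(-2|c^p|\delta_G)\,\#G_3\cap S_r^{\Delta'}$ with multiplicative constant independent of $p$, so by taking $p$ large the shadowed fraction is less than one half \emph{in every large spherical shell}. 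This is what makes $G_4$ both divergent and of the right growth rate, and it is exactly the ``at least half of $[c]$'' statement from the introduction. Your involution ``flip'' produces a different, unrelated pairing; it neither controls shadowing nor yields a per-shell bound.

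Relatedly, your Step~3 does not connect injectivity to the actual defining property of $G_4$. In the paper, a collision $(\prod g_ic^{2p}).\bp=(\prod h_jc^{2p}).\bp$ with $g_1\E\neq h_1\E$ forces (via \fullref{order}, \spthree, \spfour, and \fullref{BGI}) a geodesic from $\bp$ to $g_1.\bp$ to pass near $h_1c^{2p}.\bp$, exhibiting $g_1$ as \emph{shadowed}, contradicting $g_1\in G_4$. In other words, the shadowing condition is not a cosmetic normalization of ``entry/exit projections''; it is engineered so that it is \emph{exactly} the property that a collision would violate. Also, a small but real confusion in your $G_1$ description: you want the \emph{conjugator} $g$ to have small projections to $\E$ and $g\E$ (so the geodesic $[\bp,g^{-1}c^pg.\bp]$ stays near $g^{-1}\E$, not near $\E$); stating it as ``the conjugate spends time near $\E$ itself'' inverts the construction and makes the separation estimates (\fullref{phi1estimate}, \fullref{sameaxissameelement}) unavailable. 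Your Step~1 sanity check $\delta_{[c]}=\delta_G/2$ is in the right spirit, but it is established in the paper for $G_3$ via the bounded-to-one map chain rather than by a direct count of $[c]$.
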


The main theorem follows by an argument analogous to the one we used in \cite{ArzCasTao15}, which we reproduce for
the reader's convenience.

\begin{proof}[{Proof of \fullref{maintheorem}}]
 Let $c'\in G$ be a strongly contracting element for $G\act
 X$.
 Suppose that $N< E(c')$.
 Since $N$ is infinite, it has a finite index subgroup in common with
 $\langle c'\rangle$. But conjugation by an element of $G$ fixes $N$,
 so it moves $\langle c'\rangle$ by a bounded Hausdorff distance, which
 means $G=E(c')$ is virtually cyclic and $N$ is a finite index
 subgroup of $G$.
 However, $\langle c'\rangle$ has an undistorted orbit in
 $X$.
 Since this is a finite index subgroup of $G$, the growth of $G$ is
 only linear, contradicting the exponential growth hypothesis.
Thus, we may assume that $G$ is not virtually cyclic and that $N$
contains an element $g$ that is not in $E(c')$.
We showed in 
\cite[Proposition~3.1]{ArzCasTao15} that for sufficiently large $n$
the element $c:=g^{-1}(c')^{-n}g(c')^n$ is a strongly contracting element of
$N$. 

  Consider $G_4$ as
  provided by \fullref{main} with respect to $c$.
  Then $\hat G_4$ injects into $X$, and,
  moreover, the image is contained in $\langle\langle
  c\rangle\rangle.\bp\subset N.\bp$.
  Therefore, the growth rate of $N$ is at least as large as the growth
  rate of the image of $\hat G_4$, which we estimate using its
  Poincar\'e series:
  \begin{align*}
    \Theta_{\hat G_4}(s)&=\sum_{k=1}^\infty\sum_{(g_1,\dots,g_k)\in (G_4^*)^k}\exp(-s|g_1c^2\cdots g_kc^2|)\\
                        &\geqslant \sum_{k=1}^\infty\sum_{(g_1,\dots,g_k)\in (G_4^*)^k}\exp\left(-sk|c^2|-s\sum_{i=1}^{k}|g_i|\right)\\
    &=\sum_{k=1}^\infty\exp(-sk|c^2|)\sum_{(g_1,\dots,g_k)\in
      (G_4^*)^k}\prod_{i=1}^{k}\exp(-s|g_i|)\\
    &=\sum_{k=1}^\infty\exp(-sk|c^2|)\left(\sum_{g\in
      G_4^*}\exp(-s|g|)\right)^k\\
    &=\sum_{k=1}^\infty\left(\exp(-s|c^2|)\Theta_{G_4^*}(s)\right)^k\\
  \end{align*}
  Since $G_4$ is divergent, for sufficiently small positive $\epsilon$
  we have $\Theta_{G_4^*}(\delta_{G_4}+\epsilon)\geqslant
  \exp((\delta_{G_4}+\epsilon)|c^2|)$, so $\Theta_{\hat G_4}(\delta_{G_4}+\epsilon)$
  diverges, which implies $\delta_{\hat G_4}\geqslant
  \delta_{G_4}+\epsilon$. Thus, $\delta_N\geqslant \delta_{\hat G_4}\geqslant \delta_{G_4}+\epsilon>\delta_{G_4}=\delta_G/2$.
\end{proof}

The remainder of this section is devoted to the construction of the set $G_4$
satisfying the conclusion of \fullref{main}.
Here is a brief overview.
We need a subset of $[c]$ such that the given map is an injection.
It would be preferable if we could take conjugates of $c$ by elements
$g$ that have no long projection to any element of $\YY$.
It is easy to build an injection based on such elements, but,
unfortunately, there are too few of them in our setting---the growth rate of the set
of such elements is strictly smaller than $\delta_G$, so the growth
rate of $c$--conjugates by such elements is strictly smaller than
$\delta_G/2$.
Instead, we consider elements $g$ that do not have long projections to
$\E$ and $g\E$; in a sense, these are elements `orthogonal to $\YY$
at their endpoints', rather than `orthogonal to $\YY$' throughout.
The desired condition can be achieved with a small modification near
the ends of $g$, so this does not change the growth rate.
We call this set of elements $G_1$ and the conjugates of (a power
of) $c$ by these elements $G_2$.
We define $G_3$ by passing to a maximal subset of $G_2$ such that
elements are sufficiently far apart.
This does not change the set much; in particular, the growth rate is
unchanged.
However, it will be an important point for the injection argument, because we
show in \fullref{sameaxissameelement} that if $g$ and
$h$ are in $G_3$ then $g\E=h\E$ implies $g=h$.
The final refinement is to pass to the subset $G_4$ of $G_3$ of elements
that are not `in the shadow' of some other element of $G_3$, that is to
say, elements $g$ such that there does not exist $h$ such that a
geodesic from $\bp$ to $g.\bp$ passes close to $h.\bp$. 
The crux of the argument, \fullref{lemma:divrate}, is to show that at least half of $G_3$ is
unshadowed, so $G_4$ is divergent with growth rate $\delta_G/2$.
Finally, in \fullref{lemma:injection}, we check that $G_4$ gives the
desired injection.

\medskip

Fix an element $f_0\in G$ such that $f_0\E$
is disjoint from $\E$, $\bp\in\pi_\E(f_0.\bp)$, and $f_0.\bp\in\pi_{f_0\E}(\bp)$.
To see that such an element exists, first note that there exists
$g\in G-E(c)$, for instance, as in the first paragraph of the proof of
\fullref{maintheorem}.
If $\E$ and $g\E$ are disjoint, let $f_1$ and $f_2$ be elements of $G$ such that
$f_1.\bp\in\E$ and $f_2.\bp\in g\E$ realize the minimum distance
between $\E$ and $g\E$.
Then the element $f_0:=f_1^{-1}f_2$ satisfies our requirements.
If $g\E$ and $\E$ are not disjoint consider $g\E$ and $c^ng\E$, for
some $n$.
If they intersect then, by \pzero:
\[2\theta\geqslant d^\pi_\E(g\E,g\E)+d^\pi_\E(c^ng\E,c^ng\E)\geqslant
  d^\pi_\E(g\E,c^ng\E)\geqslant |c^n|\]
This is impossible once $n$ is sufficiently large as $c$ is strongly contracting. So, $g\E$ and
$c^ng\E$ are disjoint for such $n$, and we
get $f_0$ by the previous argument after replacing $g$ with  $g^{-1}c^ng$.

Since $\E$ and $f_0\E$ are disjoint and $\bp$ and $f_0.\bp$ are
contained in one another's projections, strong contraction of $c$, and hence of $\E$, gives a constant $C\geqslant 0$ such that:
\begin{equation}
  \label{eq:3}
  d^\pi_{f_0\E}(\bp,f_0.\bp)=\diam \pi_{f_0\E}(\bp)\leqslant C \quad\text{
    and }\quad d^\pi_\E(\bp,f_0.\bp)=\diam\pi_\E(f_0.\bp)\leqslant C
\end{equation}

In the sequel, we use the following notation: $|f_0|$ is the length of
the element $f_0$ just defined;
  $\Delta$ is as in the definition of purely exponential growth
of $G$;
$C$ is a contraction constant for $\E$; $C'$ is the corresponding
constant from \fullref{BGI}; $\theta$ and $\theta'$ are as in
\fullref{BBFS}; $K$ is a fixed constant strictly greater than $\max\{C,\theta+\theta'/2\}$.
We call these, collectively, `the constants'.
The terms `small' and `close' mean bounded by some combination
of the constants.
When possible we decline to compute these explicitly since only
finitely many such combinations appear in the proof, except where
noted.
Furthermore, $\Delta$ depends only on $G$, and the others depend only
on $\E=E(c).\bp$.
Since $E(c)=E(c^p)$ for all $p\neq 0$, we can, and will, pass to high powers of
$c$ to make $|c^p|$ much larger than all
of the constants and combinations of them that we encounter.

\medskip

Set $G_1:=\{g\in G \mid d^\pi_{\E}(\bp,g.\bp)\leqslant 2K \text{ and
} d^\pi_{g\E}(\bp,g.\bp)\leqslant 2K \text{ and }g\E\neq\E\}$.
This is a subset of $G$ that is closed under taking inverses.
\begin{lemma}
 For every $g\in G$ at least one of the elements $g$,
$f_0g$, $gf_0$, or $f_0gf_0$ belongs to $G_1$.
\end{lemma}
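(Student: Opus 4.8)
The plan is to reduce the three conditions defining $G_1$ to a single one\-/sided statement and to prove that statement from the Bestvina--Bromberg--Fujiwara axioms \pzero and \pone.

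\emph{Reformulation.} I would write $L(h):=d^\pi_\E(\bp,h.\bp)$ for $h\in G$. Translating by $h^{-1}$ gives $d^\pi_{h\E}(\bp,h.\bp)=d^\pi_\E(h^{-1}.\bp,\bp)=L(h^{-1})$, so $h\in G_1$ exactly when $L(h)\leqslant 2K$, $L(h^{-1})\leqslant 2K$, and $h\E\neq\E$; in particular $G_1$ is closed under inversion. Applying $f_0^{-1}$ to the defining relations of $f_0$ shows that $f_0^{-1}\E$ relates to $\E$ just as $f_0\E$ does: it is disjoint from $\E$ and has $\bp$ in its $\E$--projection. Hence $\bp\in\pi_\E(f_0^{\pm1}\E)$, and by \pzero (with $\X=f_0^{\pm1}\E\neq\E$) each of $\pi_\E(f_0\E)$, $\pi_\E(f_0^{-1}\E)$ has diameter at most $\theta$.

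\emph{The dichotomy.} The core claim is that \emph{for every $h\in G$, $\min\{L(h),L(f_0h)\}\leqslant\theta$}, and likewise with $f_0$ replaced by $f_0^{-1}$. If $h\E=\E$, then $f_0h.\bp\in f_0\E$, so $\pi_\E(f_0h.\bp)\subseteq\pi_\E(f_0\E)$, whence $L(f_0h)=\diam(\{\bp\}\cup\pi_\E(f_0h.\bp))\leqslant\theta$. If $h\E\neq\E$, I would examine $\delta:=d^\pi_\E(f_0^{-1}\E,h\E)$. When $\delta\leqslant\theta$, then since $\{\bp\}\subseteq\pi_\E(f_0^{-1}\E)$ and $\pi_\E(h.\bp)\subseteq\pi_\E(h\E)$ we get $L(h)=\diam(\{\bp\}\cup\pi_\E(h.\bp))\leqslant\delta\leqslant\theta$. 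When $\delta>\theta$, the sets $\E$, $f_0^{-1}\E$, $h\E$ are pairwise distinct (if $f_0^{-1}\E=h\E$ then $\delta=\diam\pi_\E(f_0^{-1}\E)\leqslant\theta$), so \pone gives $d^\pi_{f_0^{-1}\E}(\E,h\E)\leqslant\theta$; applying the isometry $f_0$ turns this into $d^\pi_\E(f_0\E,f_0h\E)\leqslant\theta$, and the computation above (with $f_0\E$, $f_0h\E$ in place of $f_0^{-1}\E$, $h\E$) yields $L(f_0h)\leqslant\theta$.

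\emph{Assembly.} Using the dichotomy on $g$, I would pick $g'\in\{g,f_0g\}$ with $L(g')\leqslant\theta$; using the $f_0^{-1}$--version on $(g')^{-1}$ and noting $f_0^{-1}(g')^{-1}=(g'f_0)^{-1}$, pick $g''\in\{g',g'f_0\}$ with $L((g'')^{-1})\leqslant\theta$. Then $g''$ is one of $g$, $f_0g$, $gf_0$, $f_0gf_0$. Since $g''.\bp$ lies within $|f_0|$ of $g'.\bp$ and $\E$ is strongly contracting, $d^\pi_\E(g'.\bp,g''.\bp)$ is bounded by a combination of the constants, so by the coarse triangle inequality $L(g'')\leqslant L(g')+d^\pi_\E(g'.\bp,g''.\bp)$ is too; as $K$ is only required to exceed $\max\{C,\theta+\theta'/2\}$ it may be taken large enough that $L(g'')$ and $L((g'')^{-1})$ are $\leqslant 2K$. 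If $g''\E\neq\E$ this gives $g''\in G_1$. Otherwise $g''.\bp\in\E$, so $L(g'')=|g''|=L((g'')^{-1})\leqslant\theta$, and $g$, obtained from $g''$ by at most two multiplications by $f_0^{\pm1}$, has length at most a combination of the constants; then each of $g$, $f_0g$, $gf_0$, $f_0gf_0$ has both $L(\,\cdot\,)$ and $L(\,(\cdot)^{-1}\,)$ bounded by combinations of the constants, and since $g\E$ and $f_0g\E=f_0(g\E)$ cannot both equal $\E$, at least one of the four has axis distinct from $\E$ and hence lies in $G_1$.

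\emph{Expected obstacle.} The genuinely delicate step is the dichotomy, specifically arranging the elements so that \pone applies (distinctness of $\E$, $f_0^{-1}\E$, $h\E$) and produces, after translating by $f_0$, a bound on $\pi_\E(f_0h\E)$ rather than on some irrelevant quantity. The rest is bookkeeping with the coarse triangle inequality for $d^\pi_\E$ and with the observation that pre\-/ or post\-/multiplying by $f_0$ disturbs only the corresponding endpoint of the element.
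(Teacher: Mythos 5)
Your proof is correct and reaches the same conclusion by a noticeably different organization. The paper proceeds by a direct case split on whether $|g|\leqslant K$ and whether $g\in E(c)$, fixing first the projection to $\E$ by possibly premultiplying by $f_0$ and then the projection to $g'\E$ by possibly postmultiplying by $f_0$, using \pone\ a second time to verify the first condition is not spoiled. You instead isolate a reusable ``dichotomy'' ($\min\{L(h),L(f_0h)\}\leqslant\theta$ and its $f_0^{-1}$\-/version) and apply it twice via the inversion symmetry $d^\pi_{h\E}(\bp,h.\bp)=L(h^{-1})$. That reformulation, reducing the two projection conditions on $G_1$ to one quantity $L$ and its value on inverses, is a genuine simplification of the bookkeeping, and your verification of the dichotomy from \pzero\ and \pone\ is sound (the distinctness checks before invoking \pone\ are exactly right, including ruling out $f_0^{-1}\E=h\E$ via \pzero).

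The one place you pay for the cleaner structure is the ``did postmultiplication by $f_0$ spoil the bound on $L(g'')$'' step. You control $d^\pi_\E(g'.\bp,g'f_0.\bp)$ by coarse Lipschitzness of the nearest\-/point projection to a strongly contracting set, which gives a bound on the order of $|f_0|$; this forces $K$ to be taken large compared to $|f_0|$ (and similarly in your edge case $g''\E=\E$, where $|g|\lesssim 2|f_0|$). The paper avoids this: precisely because one is in the situation $d^\pi_{g'\E}(\bp,g'.\bp)>K>\theta$, a second application of \pone\ yields $d^\pi_\E(g'\E,g'f_0\E)\leqslant\theta$, so the first projection moves by at most $\theta$ and the argument works for any $K>\max\{C,\theta+\theta'/2\}$. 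Since $K$ is a free parameter that only needs to be a fixed constant, your enlargement is harmless for the rest of the paper (all downstream inequalities only require $|c^p|$ to dominate fixed combinations of the constants), but it is worth knowing that the more efficient bound is available and that the paper's statement is proved with no constraint on $K$ beyond the stated one. Everything else in your argument, including the handling of the degenerate case $g''\E=\E$ by showing $|g''|\leqslant\theta$ and then that one of $g$, $f_0g$ has axis $\neq\E$ with all four candidates having controlled $L$\-/values, checks out.
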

\begin{proof}
First, consider $g\notin E(c)$ with $|g|\leqslant K$.
Recall $g\in E(c)$ if and only if $g\E=\E$. 
By definition, $\pi_\E(g.\bp)$ is the set of points of $\E$ minimizing the distance
to $g.\bp$. 
By hypothesis, $\bp$ is a point of $\E$ at distance at most $K$ from
$g.\bp$ so
$d(g.\bp,\pi_\E(g.\bp))\leqslant K$, and $d^\pi_\E(\bp,g.\bp)=\diam
\,\{\bp\}\cup\pi_\E(g.\bp)\leqslant 2K$. 
The same argument for $\bp$ projecting to $g\E$ gives
$d^\pi_{g\E}(\bp,g.\bp)\leqslant 2K$.
Thus, elements $g$ of this form already belong to $G_1$.

Next, consider an element $g\in E(c)$ such that
$|g|\leqslant K$.
Since $g\in E(c)$, we have $f_0g\E=f_0\E\neq\E$ and
$\pi_{\E}(g.\bp)=g.\bp$, so
$d^\pi_\E(\bp,g.\bp)=d(\bp,g.\bp)\leqslant K$.
Using this estimate and \eqref{eq:3}, we see:
\[d^\pi_{f_0g\E}(\bp,f_0g.\bp)\leqslant
  d^\pi_{f_0g\E}(\bp,f_0.\bp)+d^\pi_{f_0g\E}(f_0.\bp,f_0g.\bp)=d^\pi_{f_0\E}(\bp,f_0.\bp)+d^\pi_{\E}(\bp,g.\bp)\leqslant
  C+K< 2K\]
In the other direction, using the fact that
$\bp\in\pi_\E(f_0.\bp)\subset\pi_\E(f_0\E)$, along with \pzero:
\[d^\pi_\E(\bp,f_0g.\bp)\leqslant d^\pi_\E(\bp,f_0\E)\leqslant
  d^\pi_\E(f_0\E,f_0\E)\leqslant \theta<K\]
Note that we did not use $d^\pi_\E(\bp,g.\bp)\leqslant K$ for this
direction---the inequality is valid for any $g\in E(c)$.

Suppose $g\notin E(c)$ and $d^\pi_{\E}(\bp,g.\bp)> K$ then:
\[\theta<K<d_\E^\pi(\bp,g.\bp)=d^\pi_{f_0\E}(f_0.\bp,f_0g.\bp)\leqslant
  d^\pi_{f_0\E}(\E,f_0g\E)\]
This contradicts \pzero if $\E= f_0g\E$, since, by hypothesis,
$f_0\E\neq\E$ and $f_0g\E\neq f_0\E$.
Thus,  $\E$, $f_0\E$, and $f_0g\E$ are distinct, and we can apply
\pone to get:
\[d^\pi_\E(\bp,f_0g.\bp)\leqslant d^\pi_\E(f_0\E,f_0g\E)\leqslant\theta<K\]

For $|g|\leqslant K$ we are done, either $g$ or $f_0g$ is in $G_1$, and for $|g|>K$ we have shown
that there is at least one choice of $g'\in\{g,f_0g\}$
such that $g'\E\neq\E$ and  $d^\pi_\E(\bp,g'\!.\bp)\leqslant K$.
If $d^\pi_{g'\E}(\bp,g'\!.\bp)\leqslant K$ then we are done, so
suppose not.
Consider the possibility that $g'f_0\E=\E$. Then $g'f_0.\bp\in\E$, so
$\bp\in\pi_\E(f_0.\bp)$ implies
$g'\!.\bp\in\pi_{g'\E}(g'f_0.\bp)\subset \pi_{g'\E}(\E)$.
Since $g'\E\neq \E$, \pzero says $d^\pi_{g'\E}(\E,\E)\leqslant \theta$, so:
\[K<d^\pi_{g'\E}(\bp,g'\!.\bp)\leqslant d^\pi_{g'\E}(\E,\E)\leqslant
  \theta<K\]
This is a contradiction, so $\E$, $g'\E$, and $g'f_0\E$ are
distinct. Observe, since $g'\!.\bp\in\pi_{g'\E}(g'f_0.\bp)$:
\[d^\pi_{g'\E}(\E,g'f_0\E)\geqslant
  d^\pi_{g'\E}(\bp,g'f_0.\bp)\geqslant d^\pi_{g'\E}(\bp,g'\!.\bp)> K>\theta\]
Thus, by \pone and the fact that
$g'f_0.\bp\in\pi_{g'f_0\E}(g'\!.\bp)$, we have 
$d^\pi_{g'f_0\E}(\bp,g'f_0.\bp) \leqslant
d^\pi_{g'f_0\E}(\E,g'\E)\leqslant \theta<K$.

To check that the first inequality has not been spoiled, use the fact
that $d^\pi_{g'\E}(\E,g'f_0\E)>\theta$, so \pone implies
$d^\pi_\E(g'\E,g'f_0\E)\leqslant \theta$, which gives:
\[d_\E^\pi(\bp,g'f_0.\bp)\leqslant
  d^\pi_\E(\bp,g'\!.\bp)+d^\pi_\E(g'\!.\bp,g'f_0.\bp)\leqslant
  K+d^\pi_\E(g'\E,g'f_0\E)<K+\theta<2K\qedhere\]
\end{proof}

Define $\phi_0\from G\to G_1$ by fixing $G_1$ and sending an element
$g\in G-G_1$ to an arbitrary element of  the nonempty set $\{f_0g,\,gf_0,\,f_0gf_0\}\cap G_1$.
The map $\phi_0$ is surjective, at most 4-to-1, and changes norm by
at most $2|f_0|$.

\medskip

For each $p\in\mathbb{N}$, define $G_{2,p}:=\{g^{-1}c^pg \mid g\in G_1 \}$ and
$\phi_{1,p}\from G_1\to G_{2,p}:g\mapsto g^{-1}c^pg$.
\begin{lemma}\label{phi1estimate} 
  If $p$ is sufficiently large then for every $g\in G_1$ we have:
  \[2|g|+|c^p|-8C'-8K\leqslant |\phi_{1,p}(g)|\leqslant
2|g|+|c^p|\]
\end{lemma}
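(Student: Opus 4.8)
The plan is to estimate $|\phi_{1,p}(g)| = |g^{-1}c^pg| = d(g.\bp, c^pg.\bp)$ by analyzing a geodesic $\gamma$ from $g.\bp$ to $c^pg.\bp$ and projecting onto the strongly contracting set $g\E$. The upper bound is immediate from the triangle inequality: $d(g.\bp, c^pg.\bp) \leqslant d(g.\bp, \bp) + d(\bp, c^p.\bp) + d(c^p.\bp, c^pg.\bp) = 2|g| + |c^p|$, using that $g$ acts by isometries. For the lower bound I would first observe that $g\E = g\langle c\rangle.\bp$ is $C$--strongly contracting (it is a $G$--translate of $\E$), that $g.\bp$ and $c^pg.\bp = g(g^{-1}c^pg).\bp$ both lie on or near $g\E$ (indeed $g.\bp \in g\E$ and $c^pg.\bp \in c^pg\E$; note $c^p g\E = g\E$ since $c^p \in E(c) = g E(g^{-1}c^pg)g^{-1}$, wait — more simply, $c^p$ commutes into $\langle c\rangle$ so $c^p g\E$ and $g\E$ are both translates of $\E$ at bounded distance; one should check they are literally equal using $c^p\langle c\rangle = \langle c\rangle$, hence $c^p g\E = c^p g\langle c\rangle.\bp$ need not equal $g\E$ unless $g^{-1}c^pg \in E(c)$, so instead I keep $c^pg.\bp$ near the set $g\E$ via the hypothesis on $G_1$).

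The key mechanism is the defining condition of $G_1$: $d^\pi_\E(\bp, g.\bp) \leqslant 2K$ and $d^\pi_{g\E}(\bp, g.\bp) \leqslant 2K$, together with $g\E \neq \E$. Translating the first by $g$ gives $d^\pi_{g\E}(g.\bp, g^2.\bp)\leqslant 2K$; translating by $c^p g$ type manipulations lets me control where $\bp$, $g.\bp$, $c^p g.\bp$ project on $g\E$. The idea is that the projection of $g.\bp$ to $g\E$ is within $2K$ of the projection of $\bp$ (an isometric translate of the $G_1$ condition), and $\pi_{g\E}(g.\bp)$ contains $g.\bp$ itself (since $g.\bp \in g\E$), so the geodesic $\gamma$ from $g.\bp$ to $c^p g.\bp$ enters a bounded neighborhood of $g\E$ near its start; symmetrically, applying the $G_1$ condition conjugated appropriately shows $\gamma$ stays near $g\E$ near its end. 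Using \fullref{BGI} with the constant $C'$, I can locate points $\gamma(t_0), \gamma(t_1)$ with $\gamma([t_0,t_1])\subset \bar B_{3C'}(g\E)$ such that $\pi_{g\E}(g.\bp)$ is within $2C'$ of $\gamma(t_0)$ and $\pi_{g\E}(c^pg.\bp)$ is within $2C'$ of $\gamma(t_1)$, provided the relevant projection distances exceed $C'$ — which will follow once $|c^p|$ is large enough, since the projection distance $d^\pi_{g\E}(g.\bp, c^pg.\bp)$ is coarsely $|c^p|$ (it contains $g.\bp$ on one side and something close to $c^pg.\bp$ on the other, separated by roughly the translation length of $c^p$). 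Then $|\phi_{1,p}(g)| = d(g.\bp,c^pg.\bp) \geqslant d(g.\bp,\gamma(t_0)) + d(\gamma(t_0),\gamma(t_1)) + d(\gamma(t_1), c^pg.\bp)$, and I bound $d(g.\bp,\gamma(t_0))$ below by $d(g.\bp, g\E)$-ish quantities; the middle term $d(\gamma(t_0),\gamma(t_1))$ is at least $\diam\pi_{g\E}(\gamma) - 6C' \gadd |c^p| - O(K+C')$, and the outer terms contribute at least $|g| - O(C'+K)$ each by comparing $d(g.\bp, \gamma(t_0))\geqslant d(\bp, g.\bp) - d(\bp,\text{proj stuff})$ — more carefully, $d(g.\bp, c^pg.\bp)\geqslant d^\pi_{g\E}(\bp, c^p g.\bp)$-type lower bounds plus the contributions of the two "legs" from $\bp$ to $g.\bp$ and from $c^p.\bp$ to $c^pg.\bp$ that run essentially orthogonally away from $g\E$ at the ends.

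The cleanest route for the leg contributions: I would show $d(g.\bp, c^pg.\bp) \gadd |c^p| + 2|g|$ up to the stated error by the following decomposition. Since $\pi_{g\E}(g.\bp) \ni g.\bp$ and this is $2K$-close to $\pi_{g\E}(\bp)$, the point $g.\bp$ is within $2K + 2C'$ of the geodesic $\gamma$ at parameter $t_0$, so $d(\bp, \gamma(t_0)) \gadd |g| - 2K - 2C'$; likewise $d(c^p.\bp, \gamma(t_1)) \gadd |g| - 2K - 2C'$ after conjugating the $G_1$ condition. Hmm, but $\bp$ and $c^p.\bp$ are not on $\gamma$. The correct statement is $d(g.\bp, \gamma(t_0)) \leqslant 2C'$ and similarly at the other end are what \fullref{BGI} gives when projections are large; then $d(g.\bp, c^p g.\bp) = d(g.\bp,\gamma(t_0)) + d(\gamma(t_0),\gamma(t_1)) + d(\gamma(t_1),c^pg.\bp)$, with the first and last at most $2C'$, which gives an \emph{upper} bound on the middle — the wrong direction. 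So instead the $2|g|$ must come from a different geodesic: consider the geodesic from $\bp$ to $c^p g.\bp$ passing through near $g\E$, or use that $d(\bp, c^p g.\bp) \geqslant |c^p g| $ and $|c^pg|\geqslant |c^p| - |g|$...

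The main obstacle, as I see it, is precisely extracting the additive $2|g|$ term in the lower bound — this requires showing that the geodesic from $g.\bp$ to $c^pg.\bp$ must travel out to near $\bp$-translate and back, i.e. that $\pi_{g\E}(g.\bp)$ and $\pi_{g\E}(c^pg.\bp)$ are separated by $\gadd |c^p|$ while simultaneously each endpoint is far from its projection. I would handle it by working with $\E$ rather than $g\E$ after translating by $g^{-1}$: then $\phi_{1,p}(g).\bp = c^p g.\bp$ translated to $g^{-1}c^pg.\bp$, and $|\phi_{1,p}(g)| = d(\bp, g^{-1}c^pg.\bp)$. Write $g^{-1}c^pg.\bp$; a geodesic $\sigma$ from $\bp$ to this point. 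Use that $g^{-1}.\bp$ has controlled projection to $\E$ (from the $G_1$ condition, since $d^\pi_\E(\bp, g.\bp)\leqslant 2K$ is equivalent under $g^{-1}$ to $d^\pi_{g^{-1}\E}(g^{-1}.\bp, \bp)\leqslant 2K$ — not directly about $\E$). I think the honest approach is: (i) show $\pi_\E(\bp)$ and $\pi_\E(c^pg.\bp)$... no. I will instead follow the standard argument for such conjugate-length estimates — concatenate a path $\bp \to g.\bp \to c^pg.\bp \to$ (where the middle segment fellow-travels $g\E$) and a reverse, and use strong contraction and the $G_1$ condition to show this broken path is a $(1, O(C'+K))$-quasigeodesic, whence its length $2|g| + |c^p| - O(C'+K)$ is a lower bound for $d(g.\bp, c^pg.\bp)$; the constant $8C' + 8K$ is the accumulated error from the four junctions (two near each end of the middle segment) and the \fullref{BGI} buffer. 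I expect the bookkeeping of which projection-distance inequalities to invoke at each junction — and confirming each exceeds $C'$ once $|c^p|$ is large — to be the genuinely delicate part, but no single step is conceptually hard given \fullref{BGI} and \fullref{BBFS}.
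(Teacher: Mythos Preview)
Your proposal wanders because you never settle on the correct axis to project onto. In your parametrization (geodesic $\gamma$ from $g.\bp$ to $c^pg.\bp$), the axis you want is $\E$, not $g\E$: the broken path $g.\bp \to \bp \to c^p.\bp \to c^pg.\bp$ has its middle segment \emph{on} $\E$, and the $G_1$ condition $d^\pi_\E(\bp,g.\bp)\leqslant 2K$ (together with its $c^p$--translate, using $c^p\E=\E$) says exactly that $\pi_\E(g.\bp)\subset\bar B_{2K}(\bp)$ and $\pi_\E(c^pg.\bp)\subset\bar B_{2K}(c^p.\bp)$. Equivalently---and this is what the paper does---take the geodesic from $\bp$ to $g^{-1}c^pg.\bp$ and project onto $g^{-1}\E$. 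You actually noticed this (``equivalent under $g^{-1}$ to $d^\pi_{g^{-1}\E}(g^{-1}.\bp,\bp)\leqslant 2K$'') and then dismissed it as ``not directly about $\E$'' and went back to trying to force $g\E$ into the argument. The axis $g\E$ is the wrong one: it contains one endpoint $g.\bp$ of your geodesic, but nothing in the $G_1$ conditions bounds $d^\pi_{g\E}(g.\bp,c^pg.\bp)$, and $c^pg\E\neq g\E$ in general, so you have no control over where the other endpoint projects.

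Once you project onto the correct axis the computation is short and there is no quasigeodesic argument needed. The endpoints of $\gamma$ project to $\E$ at least $|c^p|-4K>C'$ apart, so \fullref{BGI} gives $t_0,t_1$ with $d(\gamma(t_0),\bp)\leqslant 2C'+2K$ and $d(\gamma(t_1),c^p.\bp)\leqslant 2C'+2K$. Then
\[
|\gamma|=d(g.\bp,\gamma(t_0))+d(\gamma(t_0),\gamma(t_1))+d(\gamma(t_1),c^pg.\bp)
\]
and each summand is bounded below, via the triangle inequality, by the corresponding leg of the broken path minus one or two copies of $(2C'+2K)$, yielding exactly $2|g|+|c^p|-8C'-8K$. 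Your attempt to extract the $2|g|$ contribution from the geodesic staying close to $g\E$ cannot work, because in that picture the legs of length $|g|$ run \emph{along} the axis rather than away from it.
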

\begin{proof}
  The upper bound is clear.
 We derive a lower bound from strong contraction.
From the definition of $G_1$ it follows that
$\pi_{g^{-1}\E}(\bp)\subset \bar B_{2K}(g^{-1}\!.\bp)$ and
$\pi_{g^{-1}\E}(g^{-1}c^pg.\bp)\subset \bar B_{2K}(g^{-1}c^p\!.\bp)$, so:
\begin{equation}
  \label{eq:1}
|c^p|-4K\leqslant  d^\pi_{g^{-1}\E}(\bp,g^{-1}c^pg.\bp)\leqslant |c^p|+4K
\end{equation}
Let $\gamma$ be a geodesic from $\bp$ to $g^{-1}c^pg.\bp$.
Its endpoints have projection to $g^{-1}\E$ at distance at least
$|c^p|-4K\gg C'$ from one another, for $p$ sufficiently large, as $c$ is strongly contracting.
Thus, for $t_0$ and $t_1$ as in \fullref{BGI}, we have
$d(\gamma(t_0),\pi_{g^{-1}\E}(\bp))\leqslant 2C'$, so
$d(\gamma(t_0),g^{-1}\!.\bp)\leqslant 2C'+2K$, and, similarly,
$d(\gamma(t_1),g^{-1}c^p\!.\bp)\leqslant 2C'+2K$.
\begin{align*}
  |\phi_{1,p}(g)|&=|\gamma|=d(\bp,\gamma(t_0))+d(\gamma(t_0),\gamma(t_1))+d(\gamma(t_1),g^{-1}c^pg.\bp)\\
  &\geqslant   \left(d(\bp,g^{-1}\!.\bp)-(2C'+2K)\right) +
    \left(d(g^{-1}\!.\bp,g^{-1}c^p\!.\bp)-2(2C'+2K)\right)\\
  &\qquad\qquad+\left(d(g^{-1}c^p\!.\bp,g^{-1}c^pg.\bp)-(2C'+2K)\right)\\
                 &=2|g|+|c^p|-8C'-8K\qedhere
\end{align*}
\end{proof}

The following lemma also follows from \eqref{eq:1}.
\begin{lemma}\label{G2order}
  Let $g^{-1}c^pg=\phi_{1,p}(g)\in G_{2,p}$. If $p$ is sufficiently
  large then $\E\order g^{-1}\E\order g^{-1}c^pg\E$ for the order
  $\order$ on $\YY[\E,g^{-1}c^pg\E]$ of \fullref{order}.
\end{lemma}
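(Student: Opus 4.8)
The plan is to read the statement off from \eqref{eq:1}, the comparison $|d_\Y-d^\pi_\Y|\leqslant 2\theta$ of \fullref{BBFS}, and axiom \spfour. Write $h:=g^{-1}c^pg$, so that $\order$ in \fullref{order} is the order on $\YY[\X,\Z]$ with $\X=\E$ and $\Z=h\E$; the task is thus to show $g^{-1}\E\in\YY[\E,h\E]$ together with $\E\order g^{-1}\E$ and $g^{-1}\E\order h\E$. A preliminary point is that $\E$, $g^{-1}\E$, $h\E$ are pairwise distinct: being in $G_1$ gives $g\E\neq\E$, i.e.\ $g$ lies outside $E(c)$, the stabilizer of $\E$, so $g^{-1}\E\neq\E$; applying $g$ and then $c^{-p}$ (which fixes $\E$) to a putative equality $g^{-1}\E=h\E$ yields $\E=g\E$, a contradiction; and $\E\neq h\E$ will drop out of the estimate below.

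Next I would bound $d_{g^{-1}\E}(\E,h\E)$ from below. Since $\bp\in\E$ and $h.\bp\in h\E$, the definition of the projection distance between subsets gives $d^\pi_{g^{-1}\E}(\E,h\E)\geqslant d^\pi_{g^{-1}\E}(\bp,h.\bp)$, and the latter is at least $|c^p|-4K$ by \eqref{eq:1}. Applying $|d_{g^{-1}\E}-d^\pi_{g^{-1}\E}|\leqslant 2\theta$ gives $d_{g^{-1}\E}(\E,h\E)\geqslant|c^p|-4K-2\theta$, which exceeds $2\theta'$ once $p$ is large enough, since $i\mapsto c^i.\bp$ is a quasi-isometric embedding and hence $|c^p|\to\infty$. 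In particular $\E\neq h\E$, for otherwise \spfour would give $d_{g^{-1}\E}(\E,h\E)=d_{g^{-1}\E}(\E,\E)\leqslant\theta'$. Combined with the distinctness above, this yields $g^{-1}\E\in\YY(\E,h\E)\subset\YY[\E,h\E]$.

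Finally I would verify the two order relations by matching \spfour against the equivalent conditions of \fullref{order}. For $\E\order g^{-1}\E$, use the condition $d_{\Y_1}(\X,\Y_0)\leqslant\theta'$ with $\Y_0=\X=\E$ and $\Y_1=g^{-1}\E$: it reads $d_{g^{-1}\E}(\E,\E)\leqslant\theta'$, true by \spfour since $g^{-1}\E\neq\E$. For $g^{-1}\E\order h\E$, use the condition $d_{\Y_0}(\Y_1,\Z)\leqslant\theta'$ with $\Y_0=g^{-1}\E$ and $\Y_1=\Z=h\E$: it reads $d_{g^{-1}\E}(h\E,h\E)\leqslant\theta'$, true by \spfour since $g^{-1}\E\neq h\E$. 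I do not expect a genuine obstacle here; the only things needing care are the distinctness bookkeeping and picking, among the four characterizations of $\order$, the one whose inequality is a ``diagonal'' projection distance $d_\bullet(\Y,\Y)$ so that \spfour applies directly, while keeping track of which of $\E$, $h\E$ plays the role of $\X$ and which plays $\Z$.
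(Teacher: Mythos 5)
Your proposal is correct and is exactly the elaboration the paper intends: the paper dispenses with this lemma in one sentence (``also follows from \eqref{eq:1}''), and your plan unpacks that by combining \eqref{eq:1} with the $|d_\Y-d^\pi_\Y|\leqslant 2\theta$ comparison and \spfour, together with the necessary distinctness bookkeeping for $\E$, $g^{-1}\E$, $g^{-1}c^pg\E$. The only thing I would flag as a matter of taste is that the paper, when it needs the analogous distinctness in the proof of \fullref{sameaxissameelement}, invokes \pzero (the bound $d^\pi_\Y(\X,\X)\leqslant\theta$) rather than \spfour; either works, and your route via \spfour is consistent with your decision to phrase everything in terms of the $d_\Y$'s of \fullref{BBFS} so that the equivalent conditions of \fullref{order} apply verbatim.
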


We also claim $\phi_{1,p}$ is bounded-to-one, independent of $p$.
To see this, fix $g\in G_1$ and
consider $h\in G_1$ such that $\phi_{1,p}(g)=\phi_{1,p}(h)$.
Then $gh^{-1}$ commutes with $c^p$, so
$gh^{-1}\in E(c^p)=E(c)$.
Thus:
\[|gh^{-1}|=d^\pi_\E(\bp,gh^{-1}\!.\bp)\leqslant
d^\pi_\E(\bp,g.\bp)+d^\pi_\E(g.\bp,gh^{-1}\!.\bp)=
d^\pi_\E(\bp,g.\bp)+d^\pi_{hg^{-1}\E}(h.\bp,\bp)=d^\pi_\E(\bp,g.\bp)+d^\pi_{\E}(h.\bp,\bp)\leqslant
4K\]
So, $h$ satisfies $h^{-1}\!.\bp\in \bar B_{4K}(g^{-1}\!.\bp)$.
By properness of $G\act X$, $\#G.\bp\cap\bar
B_{4K}(g^{-1}\!.\bp)=\#G.\bp\cap\bar B_{4K}(\bp)$ is finite.

Let $G_{3,p}$ be a maximal $(6K+1)$--separated subset of $G_{2,p}$,
that is, a subset that is maximal for inclusion among those with the property that
$d(g.\bp,h.\bp)\geqslant 6K+1$ for distinct elements $g$ and $h$.
Let $\phi_{2,p}\from G_{2,p}\to G_{3,p}$ be a choice of closest point.
This map is surjective.
By maximality, $\phi_{2,p}$ moves points a distance less than $6K+1$.
Thus, by properness of $G\act X$, the map $\phi_{2,p}$ is
bounded-to-one, independent of $p$.

\begin{lemma}\label{sameaxissameelement}
If $p$ is sufficiently large then $g^{-1}c^pg\E=h^{-1}c^ph\E$ for $g^{-1}c^pg$ and
$h^{-1}c^ph$ in $G_{3,p}$ implies  $g^{-1}c^pg=h^{-1}c^ph$.
\end{lemma}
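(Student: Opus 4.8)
The plan is to bound the distance between the orbit points of $g^{-1}c^pg$ and $h^{-1}c^ph$ by a constant and then invoke the $(6K+1)$--separation of $G_{3,p}$. Write $a:=g^{-1}c^pg$ and $b:=h^{-1}c^ph$ (so $g,h\in G_1$), and set $\Z_0:=a\E=b\E$. The hypothesis is equivalent to $b^{-1}a\in E(c)$, since $E(c)$ is exactly the stabilizer of $\E$; but the observation I would actually use is that, because $\bp\in\E$, both $a.\bp$ and $b.\bp$ lie on the single strongly contracting set $\Z_0$. So it suffices to show each of $a.\bp$, $b.\bp$ is within some constant $M$ (a small combination of $K$ and $\theta$) of $\pi_{\Z_0}(\bp)$: picking any $x\in\pi_{\Z_0}(\bp)$ then gives $d(a.\bp,b.\bp)\leqslant d(a.\bp,x)+d(x,b.\bp)\leqslant d^\pi_{\Z_0}(\bp,a.\bp)+d^\pi_{\Z_0}(\bp,b.\bp)\leqslant 2M$, and once $2M<6K+1$ the separation forces $a=b$.

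First I would estimate $d^\pi_{\Z_0}(\bp,a.\bp)$. Translating by $a^{-1}$, which carries $\Z_0=a\E$ to $\E$ and $a.\bp$ to $\bp$, this equals $d^\pi_\E(\bp,a^{-1}.\bp)$. Now $a^{-1}=g^{-1}c^{-p}g$ moves $\bp$ into the translate $a^{-1}\E$, so $\pi_\E(a^{-1}.\bp)\subseteq\pi_\E(a^{-1}\E)$ and it is enough to bound $d^\pi_\E(\bp,a^{-1}\E)$. This is where strong contraction does the real work: \fullref{G2order} applies equally with $c^{-1}$ in place of $c$ (the set $G_1$ is unchanged, $E(c^{-1})=E(c)$, and \eqref{eq:1} holds for $g^{-1}c^{-p}g$), so for $p$ large we get $\E\order g^{-1}\E\order a^{-1}\E$ in $\YY[\E,a^{-1}\E]$. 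By \fullref{order} this gives $d^\pi_{g^{-1}\E}(\E,a^{-1}\E)>\theta$, so \pone forces $d^\pi_\E(g^{-1}\E,a^{-1}\E)\leqslant\theta$; and since $g\in G_1$ we have $d^\pi_\E(\bp,g^{-1}.\bp)\leqslant 2K$ while \pzero bounds $\diam\pi_\E(g^{-1}\E)\leqslant\theta$, so $d^\pi_\E(\bp,g^{-1}\E)\leqslant 2K+\theta$. The triangle inequality then gives $d^\pi_\E(\bp,a^{-1}\E)\leqslant 2K+2\theta=:M$, hence $d^\pi_{\Z_0}(\bp,a.\bp)\leqslant M$; the same computation with $h$ in place of $g$ gives $d^\pi_{\Z_0}(\bp,b.\bp)\leqslant M$.

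Putting these together, $d(a.\bp,b.\bp)\leqslant 2M=4K+4\theta$. Since $\theta'=11\theta$ and $K>\theta+\theta'/2$, we have $4K+4\theta<6K+1$, so two distinct elements of the $(6K+1)$--separated set $G_{3,p}$ cannot be $a$ and $b$; therefore $a=b$, i.e.\ $g^{-1}c^pg=h^{-1}c^ph$. The only genuinely nontrivial step is the projection estimate of the second paragraph: although $a^{-1}.\bp$ may be arbitrarily far from $\E$ in the metric $d$, its projection to $\E$ stays in a bounded neighborhood of $\bp$, because $g\in G_1$ is ``orthogonal to $\YY$ at its endpoints'' and because $g^{-1}\E$ lies between $\E$ and $a^{-1}\E$ in the Bestvina--Bromberg--Fujiwara order. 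Extracting this is exactly what \fullref{G2order} and the axioms of \fullref{BBFS} provide, and it is the reason the statement needs $p$ to be sufficiently large.
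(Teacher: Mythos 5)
Your proof is correct and follows essentially the same route as the paper's: both bound $d^\pi_{a\E}(\bp,a.\bp)$ by a small multiple of $K$ (the paper gets $3K$ by decomposing through the intermediate orbit point $g^{-1}c^p\!.\bp$; you get $2K+2\theta$ by translating by $a^{-1}$ and decomposing through $g^{-1}\E$), both obtain the key projection estimate from \pone applied to the distinct triple $\E$, $g^{-1}\E$, $a^{\pm1}\E$, and both conclude from the $(6K+1)$--separation of $G_{3,p}$. One small simplification worth noting: the detour through the $c^{-1}$--analogue of \fullref{G2order} is more machinery than you need, since $d^\pi_{g^{-1}\E}(\E,a^{-1}\E)>\theta$ (and the distinctness of the three axes) follows immediately from the $c^{-p}$--analogue of \eqref{eq:1} together with \pzero, which is exactly how the paper verifies the hypotheses of \pone in its version.
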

\begin{proof}
Since $g\in G_1$, $d^\pi_{g\E}(\bp,g.\bp)\leqslant 2K$, and: 
\[d^\pi_{g^{-1}c^pg\E}(\bp,g^{-1}c^pg.\bp) \leqslant 
                                          d^\pi_{g^{-1}c^pg\E}(\bp,g^{-1}c^p\!.\bp)+d^\pi_{g^{-1}c^pg\E}(g^{-1}c^p\!.\bp,g^{-1}c^pg.\bp)\leqslant d^\pi_{g^{-1}c^pg\E}(\E,g^{-1}\E)+2K\]
Furthermore, $g\in G_1$ implies $\E\neq g^{-1}\E\neq g^{-1}c^pg\E$.
By \eqref{eq:1}, 
$d^\pi_{g^{-1}\E}(\E,g^{-1}c^pg\E)\geqslant |c^p|-4K\gg\theta$, so by
\pzero, $\E\neq g^{-1}c^pg\E$.
Thus $\E$, $g^{-1}\E$, and $g^{-1}c^pg\E$ are distinct  and
we can apply \pone to see $d^\pi_{g^{-1}c^pg\E}(\E,g^{-1}\E)\leqslant \theta<K$.
Plugging this into previous inequality gives:
\begin{equation}
  \label{eq:6}
d^\pi_{g^{-1}c^pg\E}(\bp,g^{-1}c^pg.\bp)<3K  
\end{equation}

The same computation applies for $h$, so $\pi_{g^{-1}c^pg\E}(\bp)\subset \bar
B_{3K}(g^{-1}c^pg.\bp)\cap\bar B_{3K}(h^{-1}c^ph.\bp)$.
Thus,  $g^{-1}c^pg$ and $h^{-1}c^ph$ are elements at distance at most
$6K$ in a $(6K+1)$--separated set; hence, they are equal.
\end{proof}

For each $D\geqslant 0$, consider the set $G_{4,p,D}'$ consisting of elements $g^{-1}c^pg\in G_{3,p}$ such that
there exists a different element
$h^{-1}c^ph\in G_{3,p}$ such that $h^{-1}c^phc^{2p}\!.\bp$ is
within distance $D$ 
of a geodesic $\gamma$ from
$\bp$ to $g^{-1}c^pg.\bp$.
Define $G_{4,p,D}:=G_{3,p}-G_{4,p,D}'$. 
\begin{lemma}\label{lemma:divrate}
For all $D\geqslant 0$, for $p$ sufficiently large, $G_{4,p,D}$ is divergent and $\delta_{G_{4,p,D}}=\delta_G/2$.
\end{lemma}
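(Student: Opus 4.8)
The plan is to prove \fullref{lemma:divrate} by establishing two matching bounds on the growth of $G_{4,p,D}$: an upper bound $\delta_{G_{4,p,D}}\leqslant \delta_G/2$ coming from the fact that $G_{4,p,D}\subset G_{2,p}$ and \fullref{phi1estimate}, and a lower bound via a divergence estimate showing that the Poincar\'e series $\Theta_{G_{4,p,D}}(s)$ diverges at $s=\delta_G/2$. The upper bound is the easy direction: since $G_{4,p,D}\subset G_{3,p}\subset G_{2,p}$, an element of norm at most $r$ has the form $\phi_{1,p}(g)$ with $2|g|+|c^p|-8C'-8K\leqslant r$, i.e.\ $|g|\leqslant (r-|c^p|+8C'+8K)/2$; since $\phi_{1,p}$ and $\phi_{2,p}$ are boundedly-to-one, $\#G_{4,p,D}\cap \bar B_r\lmul \#G_1\cap \bar B_{(r-|c^p|+8C'+8K)/2}\lmul \exp(\delta_G(r-|c^p|)/2)$, using that $G_1$ has the same growth rate $\delta_G$ as $G$ (it surjects onto $G$ via an at-most-$4$-to-$1$, norm-changing-by-$2|f_0|$ map $\phi_0$, so $\delta_{G_1}=\delta_G$, and purely exponential growth of $G$ gives $\#G\cap\bar B_R\lmul\exp(\delta_G R)$). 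Taking $\log$ and dividing by $r$ gives $\delta_{G_{4,p,D}}\leqslant \delta_G/2$.

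For the lower bound I would first record that the full set $G_{3,p}$ has growth rate $\delta_G/2$ and is in fact divergent: $G_{2,p}$ is the image of $G_1$ under the boundedly-to-one, norm-distortion-bounded map $\phi_{1,p}$ (\fullref{phi1estimate}), and $G_{3,p}$ is the image of $G_{2,p}$ under the boundedly-to-one map $\phi_{2,p}$, so up to multiplicative and additive error $\Theta_{G_{3,p}}(s)\emul \exp(-s(|c^p|-8C'-8K))\,\Theta_{G_1}(2s)$ on the relevant range, while purely exponential growth of $G$ makes $G$, hence $G_1$, divergent at $\delta_G$; this forces $G_{3,p}$ to be divergent with $\delta_{G_{3,p}}=\delta_G/2$. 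The crux is then to show that $G_{4,p,D}$ captures \emph{at least half} of $G_{3,p}$ in the Poincar\'e-series sense, so that it too is divergent at $\delta_G/2$. The idea is a combinatorial shadowing argument: for each ``shadowed'' element $g^{-1}c^pg\in G_{4,p,D}'$, a geodesic $\gamma$ from $\bp$ to $g^{-1}c^pg.\bp$ passes within $D$ of $h^{-1}c^phc^{2p}.\bp$ for some other $h^{-1}c^ph\in G_{3,p}$; using \fullref{BGI}, \fullref{G2order}, and the total order of \fullref{order} on $\YY[\E,g^{-1}c^pg\E]$, one shows that $h^{-1}c^ph\E$ must lie between $g^{-1}\E$ and $g^{-1}c^pg\E$ in this order, and that the initial segment of $\gamma$ up to the shadow point is (coarsely) a geodesic realizing $h^{-1}c^ph$ followed by a $c^{2p}$ segment. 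In particular the ``remaining part'' of $g^{-1}c^pg$ past the shadow of $h^{-1}c^phc^{2p}$ is itself (coarsely) an element whose conjugating part lies in $G_1$ and whose associated axis is $g^{-1}c^pg\E$: this is the place the geometry forces a near-product decomposition $g^{-1}c^pg \approx (h^{-1}c^ph)\,c^{2p}\,(\text{tail})$.

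From this decomposition I would set up a map that, roughly, sends a shadowed element to the pair consisting of the ``shadowing'' element $h^{-1}c^ph\in G_{3,p}$ and the tail; bounding the fibers of this map (using properness and the separation built into $G_{3,p}$, together with \fullref{sameaxissameelement} to pin down which axis, hence which element, a given tail–axis pair corresponds to) shows that the Poincar\'e series of $G_{4,p,D}'$ is dominated, up to multiplicative error, by $\exp(-s|c^{2p}|)\,\Theta_{G_{3,p}}(s)\,\Theta_{G_{3,p}}(s)$ — morally $\Theta_{G_{4,p,D}'}(s)\lmul \big(\exp(-s|c^{2p}|)\Theta_{G_{3,p}}(s)\big)\Theta_{G_{3,p}}(s)$. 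Because $\Theta_{G_{3,p}}$ is divergent at $\delta_G/2$, for every $\epsilon>0$ the factor $\exp(-(\delta_G/2+\epsilon)|c^{2p}|)\Theta_{G_{3,p}}(\delta_G/2+\epsilon)$ can be made $\leqslant 1$ by first choosing $p$ large (so $|c^{2p}|$ dominates) — here one uses that for $s$ slightly above $\delta_G/2$ the series $\Theta_{G_{3,p}}(s)$ is bounded while $\exp(-s|c^{2p}|)\to 0$, but to handle $s$ near $\delta_G/2$ one argues that divergence of $\Theta_{G_{3,p}}$ at exactly $\delta_G/2$ together with the $\exp(-s|c^{2p}|)$ prefactor makes $\Theta_{G_{4,p,D}'}(s)=o(\Theta_{G_{3,p}}(s))$ as $s\downarrow\delta_G/2$, once $p$ is large enough that $|c^{2p}|$ exceeds the relevant thresholds. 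Consequently $\Theta_{G_{4,p,D}}(s)=\Theta_{G_{3,p}}(s)-\Theta_{G_{4,p,D}'}(s)$ still diverges as $s\downarrow \delta_G/2$, giving $\delta_{G_{4,p,D}}\geqslant \delta_G/2$ and divergence, which with the upper bound completes the proof.

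The main obstacle I expect is the shadowing step: rigorously extracting the near-product decomposition $g^{-1}c^pg\approx (h^{-1}c^ph)c^{2p}(\text{tail})$ from the sole hypothesis that $\gamma$ passes within $D$ of $h^{-1}c^phc^{2p}.\bp$, and in particular controlling the tail well enough that its contribution is again governed by $\Theta_{G_{3,p}}$ rather than by something larger. This requires carefully combining the bounded geodesic image property (\fullref{BGI}) to force $\gamma$ to fellow-travel the axes $g^{-1}\E$, $h^{-1}c^ph\E$, and $g^{-1}c^pg\E$ in the correct order (\fullref{order}, \fullref{G2order}), with the $(6K+1)$–separation of $G_{3,p}$ and \fullref{sameaxissameelement} to rule out overcounting. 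The book-keeping of how the constants $C'$, $K$, $\theta$, $\theta'$, and $D$ interact, and the choice of how large $p$ must be so that $|c^p|$ dwarfs all of $D$ and the constants simultaneously, is where the real work lies; everything after the decomposition is a standard divergent-series manipulation of the kind already used in the proof of \fullref{maintheorem}.
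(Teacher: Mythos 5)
Your geometric outline (maps $\phi_0,\phi_{1,p},\phi_{2,p}$, the growth rate and divergence of $G_{3,p}$, the ordering of the axes $\E$, $h^{-1}\E$, $h^{-1}c^ph\E$, $g^{-1}\E$, $g^{-1}c^pg\E$ via \fullref{order} and \fullref{G2order}, and the near-product decomposition of a shadowed element) matches the paper's proof closely.  The gap is in the final counting step, and it is a real one.

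You propose to dominate the Poincar\'e series of $G_{4,p,D}'$ by $\exp(-s|c^{2p}|)\,\Theta_{G_{3,p}}(s)^2$ and then argue that the prefactor makes $\Theta_{G_{4,p,D}'}(s)=o(\Theta_{G_{3,p}}(s))$ as $s\downarrow\delta_G/2$.  This cannot work: dividing the proposed bound by $\Theta_{G_{3,p}}(s)$ leaves $\exp(-s|c^{2p}|)\,\Theta_{G_{3,p}}(s)$, and $\Theta_{G_{3,p}}(s)\to\infty$ as $s\downarrow\delta_G/2$ because $G_{3,p}$ is divergent, so the quotient blows up rather than tending to $0$.  No choice of $p$ repairs this, since $\exp(-s|c^{2p}|)$ is merely a bounded prefactor while the series factor is unbounded.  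More fundamentally, packaging the estimate as a product of two Poincar\'e series discards the crucial \emph{quadratic} cost of the shadowing element: the decomposition reads $g^{-1}.\bp = h^{-1}c^phc^{2p}a.\bp$ (or the analogous one in the other ordering), so $|g|\eadd 2|h|+3|c^p|+|a|$ and hence $|g^{-1}c^pg|\eadd 4|h|+7|c^p|+2|a|$.  Choosing $h$ of length $i$ buys only $\exp(\delta_G i)$ possibilities but consumes roughly $4i$ of the budget $r$.  The paper exploits exactly this imbalance, working shell by shell: with $r$ fixed and $i\leqslant |h|<i+1$, purely exponential growth gives at most $\exp(\delta_G i)\cdot\exp(\delta_G((r-7|c^p|)/2-2i))$ elements, and summing over $i$ produces a geometric series with ratio $\exp(-\delta_G)<1$, yielding
\[
\#G_{4,p,D}'\cap S_r^{\Delta'}\lmul \exp(-5\delta_G|c^p|/2)\exp(r\delta_G/2),
\]
while $\#G_{3,p}\cap S_r^{\Delta'}\emul \exp(-\delta_G|c^p|/2)\exp(r\delta_G/2)$.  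The shell-by-shell ratio is therefore $\lmul\exp(-2\delta_G|c^p|)$ with multiplicative constant \emph{independent of $p$}, which can be forced below $1/2$ by taking $p$ large.  That ``less than half in every large enough shell'' conclusion immediately gives $\delta_{G_{4,p,D}}=\delta_G/2$ and divergence.  To repair your argument you must replace the product of two full Poincar\'e series by this shell-by-shell count: fix the shell $S_r^{\Delta'}$, parametrize shadowed elements by the pair $(h,a)$ (or $(b,b')$), impose the constraint $|g^{-1}c^pg|\approx r$ so that the two lengths are tied together, and only then sum.  Summing the Poincar\'e series of the two factors independently, as you do, over-counts precisely because it forgets that constraint.

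One smaller remark: your phrase ``make $\exp(-(\delta_G/2+\epsilon)|c^{2p}|)\,\Theta_{G_{3,p}}(\delta_G/2+\epsilon)\leqslant 1$ by choosing $p$ large'' is also delicate because $\Theta_{G_{3,p}}$ itself depends on $p$ (the elements of $G_{3,p}$ all have length at least about $|c^p|$); the paper sidesteps this by comparing shell counts, where the $p$-dependence appears only in explicit $\exp(-\text{const}\cdot\delta_G|c^p|)$ prefactors with $p$-independent constants.
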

\begin{proof}
The maps $\phi_{2,p}$, $\phi_{1,p}$, and $\phi_0$ are
surjective and 
bounded-to-one, with bound independent of $p$, so their composition is
as well.
Furthermore, we know how they change norm: $\phi_0$ moves points at most $2|f_0|$, $\phi_{2,p}$
  moves less than $6K+1$, and $|\phi_{1,p}(g)|$ is estimated in \fullref{phi1estimate}. 
Putting these together, for any $r\geqslant 0$ and  $g\in G\cap
S_r^\Delta$ we have:
\begin{equation}
  \label{eq:9}
2r+|c^p|-4|f_0|-8C'-14K-1  \leqslant |\phi_{2,p}\circ\phi_{1,p}\circ\phi_0(g)|<2r+|c^p|+2\Delta+4|f_0|+6K+1
\end{equation}
Let $t:=2r+|c^p|-4|f_0|-8C'-14K-1$, $E:=4|f_0|+4C'+10K+1$, and $\Delta':=2(\Delta+E)$,
so that \eqref{eq:9} shows:
\[\phi_{2,p}\circ\phi_{1,p}\circ\phi_0(G\cap S_r^\Delta)\subset
  G_{3,p}\cap S_{t}^{\Delta'}\subset \phi_{2,p}\circ\phi_{1,p}\circ\phi_0(G\cap S_{r-E}^{\Delta+2E})\]
This lets us compare the size of spherical shells in $G_{3,p}$ and $G$:
\begin{equation}
  \label{eq:11}
  \#G\cap S_{r-E}^{\Delta+2E}\geqslant \#G_{3,p}\cap S_t^{\Delta'}\gmul \#G\cap S_r^\Delta
\end{equation}
Pure exponential growth of $G$ says that $\#G\cap S_r^\Delta\emul
\exp(r\delta_G)$. Combining this with \eqref{eq:11}, we have:
\begin{equation}
  \label{eq:10}
  \#G_{3,p}\cap S_t^{\Delta'}\emul \exp(\delta_Gr)\emul\exp(-\delta_G|c^p|/2)\exp(t\delta_G/2)
\end{equation}

This tells us that $\delta_{G_{3,p}}=\delta_G/2$ and $G_{3,p}$ is
divergent.

Now we will estimate an upper bound for $\#G_{4,p,D}'\cap
S_r^{\Delta'}$ and see that for large $p$ and $r$ it is less than half of $\#G_{3,p}\cap
S_r^{\Delta'}$.
Thus, to get  $G_{4,p,D}$ we threw away less than half of $G_{3,p}$,
at least outside a sufficiently large radius.
We conclude that $\delta_{G_{4,p,D}}=\delta_G/2$ and $G_{4,p,D}$ is
divergent.

\medskip

Consider $g^{-1}c^pg\in G_{4,p,D}'\cap S_r^{{\Delta'}}$ for any $r> 7|c^p|$.
By definition of $G_{4,p,D}'$, there exists 
$h^{-1}c^ph\in G_{3,p}$ such that $h^{-1}c^ph\not=g^{-1}c^pg$ and $h^{-1}c^phc^{2p}\!.\bp$ is
close to a geodesic $\gamma$ from
$\bp$ to $g^{-1}c^pg.\bp$.

Let $\order$ be the order of \fullref{order} on $\YY[
\E,g^{-1}c^pg\E]$.
The first step of the proof is to show that $\E$, $g^{-1}\E$, 
$g^{-1}c^pg\E$, $h^{-1}\E$, and $h^{-1}c^ph\E$ are distinct elements
of $\YY[\E,g^{-1}c^pg\E]$, and that the ordering is one of the two possibilities shown in
\fullref{fig:3_3_1} and \fullref{fig:3_3_2}.
\begin{figure}[h]
  \centering
  \labellist
  \tiny
  \pinlabel $g^{-1}c^pg.\bp$ [tr] at 477 32
  \pinlabel $h^{-1}\!.\bp$ [t] at 45 2
  \pinlabel $h^{-1}c^p\!.\bp$ [t] at 80 2
  \pinlabel $g^{-1}\!.\bp$ [tl] at 280 30
  \pinlabel $g^{-1}c^p\!.\bp$ [tl] at 315 32
  \pinlabel $h^{-1}c^ph.\bp$ [t] at 141 7
  \pinlabel $h^{-1}c^phc^{2p}\!.\bp$ [t] at 187 16
  \small
  \pinlabel $\gamma$ [t] at 419 49
  \pinlabel $a$ [b] at 200 55
  \pinlabel $g$ [b] at 142 88
  \pinlabel $g$ [b] at 351 88
  \pinlabel $h$ [t] at 28 23
  \pinlabel $h$ [t] at 101 23
  \endlabellist
  \includegraphics[width=\textwidth]{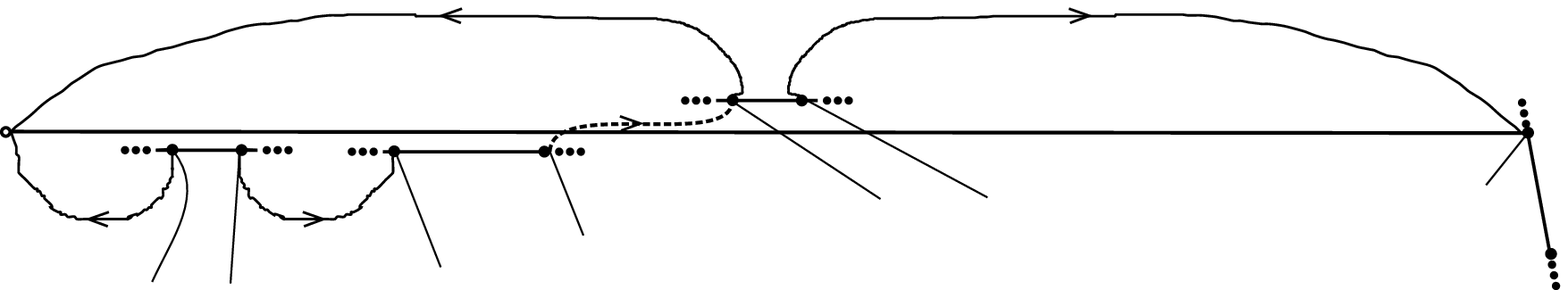}
  \caption{$h^{-1}c^ph\E$ before $g^{-1}\E$, that is, $h^{-1}c^ph\E\order g^{-1}\E$}\label{fig:3_3_1}
\end{figure}

\begin{figure}[h]
  \centering
  \labellist
  \tiny
  \pinlabel $g^{-1}c^pg.\bp$ [tr] at 477 32
  \pinlabel $h^{-1}\!.\bp$ [tr] at 135 4
  \pinlabel $h^{-1}c^p\!.\bp$ [tl] at 150 4
  \pinlabel $g^{-1}\!.\bp$ [t] at 214 4
  \pinlabel $g^{-1}c^p\!.\bp$ [tl] at 244 7
  \pinlabel $h^{-1}c^ph.\bp$ [t] at 300 26 
  \pinlabel $h^{-1}c^phc^{2p}\!.\bp$ [tl] at 343 26
\small
  \pinlabel $\gamma$ [t] at 419 49
  \pinlabel $b$ [br] at 213 38
  \pinlabel $b'$ [r] at 258 40
  \pinlabel $g$ [b] at 142 88
  \pinlabel $g$ [b] at 351 88
  \pinlabel $h$ [t] at 55 26
  \pinlabel $h$ [t] at 233 26
  \endlabellist
    \includegraphics[width=\textwidth]{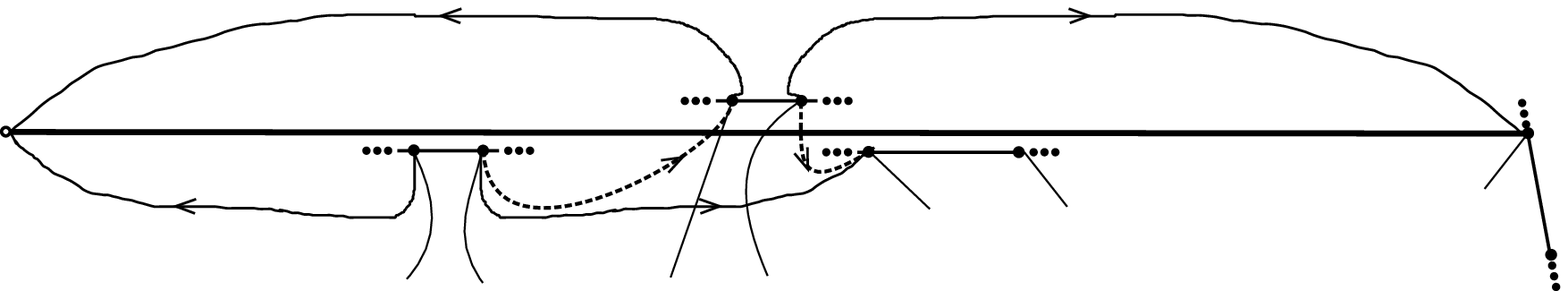}
  \caption{$h^{-1}c^ph\E$ after $g^{-1}\E$, that is, $g^{-1}\E\order h^{-1}c^ph\E$}
  \label{fig:3_3_2}
\end{figure}

By \fullref{G2order}, $\E\order g^{-1}\E\order g^{-1}c^pg\E$, so these
three are distinct.
Similarly, $\E$, $h^{-1}\E$, and $h^{-1}c^ph\E$ are
distinct.
\fullref{sameaxissameelement} implies $g^{-1}c^pg\E\neq h^{-1}c^ph\E$.

We have $|c^p|+2|g|\geqslant |g^{-1}c^pg|\gadd |h^{-1}c^phc^{2p}|$
since
$h^{-1}c^phc^{2p}\!.\bp$ is close to a geodesic from $\bp$ to
$g^{-1}c^pg.\bp$.
On the other hand, any geodesic  from $\bp$ to $h^{-1}c^phc^{2p}\!.\bp$
has projection to $h^{-1}c^ph\E$ of diameter greater than $|c^{2p}|-3K$ by
\eqref{eq:6}.
This is much larger than $C'$ when $p$ is large, so
$|h^{-1}c^phc^{2p}|\eadd |h^{-1}c^ph|+|c^{2p}|\gadd 3|c^p|+2|h|$ by
\fullref{BGI} and
\fullref{phi1estimate}.
Thus:
\begin{equation}
  \label{eq:8}
|g|\gadd|h|+|c^p|  
\end{equation}

However, by definition of $G_1$, if $h^{-1}\E=g^{-1}\E$, then:
\[4K\geqslant d^\pi_{g^{-1}\E}(\bp,g^{-1}\!.\bp)+d^\pi_{h^{-1}\E}(\bp,h^{-1}\!.\bp)\geqslant
  d(g^{-1}\!.\bp,h^{-1}\!.\bp)\geqslant |g|-|h|\gadd |c^p|\]
This is a contradiction for sufficiently large $p$.
Similar considerations show $h^{-1}\E\neq g^{-1}c^pg\E$, since $\bp$
projects close to $h^{-1}\!.\bp$ in $h^{-1}\E$, by definition of $G_1$, and
close to $g^{-1}c^pg.\bp$ in
$g^{-1}c^pg\E$, by \eqref{eq:6},  but
$|h|  \ll |g^{-1}c^pg|$, by \fullref{phi1estimate} and~(\ref{eq:8}).

Next we show that $h^{-1}\E$ and $h^{-1}c^ph\E$ belong to
$\YY[\E,g^{-1}c^pg\E]$, and
in the course of the proof we will observe $g^{-1}\E\neq h^{-1}c^ph\E$.
By hypothesis, there exists $t$ such that $d(\gamma(t),h^{-1}c^phc^{2p}\!.\bp)\leqslant D$.
This implies
$d^\pi_{h^{-1}c^ph\E}(\gamma(t),h^{-1}c^phc^{2p}\!.\bp)\leqslant 2D$.
Since $d^\pi_{h^{-1}c^ph\E}(\bp,h^{-1}c^ph.\bp)<3K$, by
\eqref{eq:6}, we have $d^\pi_{h^{-1}c^ph\E}(\bp,\gamma(t))\geqslant
|c^{2p}|-2D-3K$, which is large for $p$ sufficiently large.
Let $t_0$ and $t_1$ be the first and last times $\gamma$ is distance
$C'$ from $h^{-1}c^ph\E$, as in \fullref{BGI} with respect to $h^{-1}c^ph\E$.
We cannot have $t\leqslant t_0$, since then
$d^\pi_{h^{-1}c^ph\E}(\bp,\gamma(t))\leqslant C'$, but $\pi_{h^{-1}c^ph\E}(\bp)$ is close
  to $h^{-1}c^ph.\bp$, hence, far from $\gamma(t)$ and $h^{-1}c^phc^{2p}\!.\bp$.

  If $t\geqslant t_1$ then
  $d^\pi_{h^{-1}c^ph\E}(\gamma(t),g^{-1}c^pg.\bp)\leqslant C'$, so:
  \begin{align*}
    d^\pi_{h^{-1}c^ph\E}(\bp,g^{-1}c^pg.\bp)&\geqslant
                                              d^\pi_{h^{-1}c^ph\E}(\bp,\gamma(t))-d^\pi_{h^{-1}c^ph\E}(\gamma(t),g^{-1}c^pg.\bp)\\
    &\geqslant |c^{2p}|-3K-2D-C'
  \end{align*}

  If $t_0<t<t_1$ then we use \fullref{BGI} to say $d^\pi_{h^{-1}c^ph\E}(\bp,g^{-1}c^pg.\bp)\geqslant
                                                  |\gamma(t_0,t_1)|-4C'$, and
                                                  then estimate:
  \begin{align*}
                                                  |\gamma(t_0,t_1)|
                                                &\geqslant
                                                  d(\gamma(t_0),\gamma(t))\\
    &\geqslant d\left(\pi_{h^{-1}c^ph\E}(\gamma(t_0)),\pi_{h^{-1}c^ph\E}(\gamma(t))\right)-C'-D\\
                                                &\geqslant
                                                  d^\pi_{h^{-1}c^ph\E}(\gamma(t_0),\gamma(t))-\diam
                                                  \pi_{h^{-1}c^ph\E}(\gamma(t_0))-\diam\pi_{h^{-1}c^ph\E}(\gamma(t))
                                                  -C'-D\\
  &\geqslant
                                                  d^\pi_{h^{-1}c^ph\E}(\gamma(t_0),\gamma(t))-2C-C'-D\\
    &\geqslant d^\pi_{h^{-1}c^ph\E}(\bp,\gamma(t))-d^\pi_{h^{-1}c^ph\E}(\gamma(t_0),\bp)-2C-C'-D\\
    &\geqslant |c^{2p}|-2D-3K-C'-2C-C'-D
  \end{align*}
  Thus, $h^{-1}c^ph\E\in\YY[\E,g^{-1}c^pg\E]$ once $p$ is sufficiently large.
Additionally, this shows $g^{-1}\E\neq h^{-1}c^ph\E$ because, by
\eqref{eq:1}, \pzero,  and the fact from \fullref{BBFS} that
$|d_\Y-d^\pi_\Y|\leqslant 2\theta$, we have
$d_{g^{-1}\E}(\E,g^{-1}c^pg\E)\eadd |c^p|$, while the estimates above show $d_{h^{-1}c^ph\E}(\E,g^{-1}c^pg\E)\gadd
|c^{2p}|$, and these are incompatible for sufficiently large $p$.
Thus, the five axes are distinct.

  Now consider $h^{-1}\E$.
  Recall, $\gamma(t)$ is close to
  $h^{-1}c^phc^{2p}\!.\bp$, but $h^{-1}c^phc^{2p}\!.\bp$ is far from
  $h^{-1}\E$, so strong contraction implies
  $d^\pi_{h^{-1}\E}(\gamma(t), h^{-1}c^phc^{2p}\!.\bp)\leqslant C$.
 Since $\bp$ projects close to $h^{-1}\!.\bp$ in $h^{-1}\E$ and
 $h^{-1}c^phc^{2p}\!.\bp\in h^{-1}c^ph\E$ projects close to
 $h^{-1}c^p\!.\bp$, \fullref{BGI} says $\gamma$ must pass close to $h^{-1}c^p\!.\bp$.
Now we can run the same argument as for $h^{-1}c^ph\E$ to see
$h^{-1}\E\in\YY[\E,g^{-1}c^pg\E]$ once $p$ is sufficiently large.

The first step of the proof is completed by observing that
$g^{-1}\E\order h^{-1}\E$ implies $|h|\eadd |g|+|c^p|$, which cannot be true
when $p$ is sufficiently large, by \eqref{eq:8}.
Thus, $h^{-1}\E$ comes before $g^{-1}\E$ and $h^{-1}c^ph\E$ under
$\order$, and we are left with the possibilities that $h^{-1}c^ph\E\order g^{-1}\E$, as in  \fullref{fig:3_3_1}, or the converse, as in \fullref{fig:3_3_2}.

In the case of \fullref{fig:3_3_1}, we have $h^{-1}c^ph\E\order g^{-1}\E$,
so the projection of $h^{-1}c^phc^{2p}\!.\bp$ to $g^{-1}\E$ is close
to the projection of $\bp$, which we know to be close to $g^{-1}\!.\bp$.
Write
$g^{-1}\!.\bp=h^{-1}c^phc^{2p}a.\bp$ as in
\fullref{fig:3_3_1} with
$|g|\eadd 2|h|+3|c^p|+|a|$.

In the case of \fullref{fig:3_3_2}, we have
$h^{-1}\E\order g^{-1}\E$ and $g^{-1}\E\order h^{-1}c^ph\E$.
The former implies the projection of $h^{-1}c^p\!.\bp$ to
$g^{-1}\E$ is close to the projection of $\bp$, which we know to be
close to $g^{-1}\!.\bp$, while the latter implies the projection of
$h^{-1}c^ph.\bp$ to $g^{-1}\E$ is close to the projection of
$g^{-1}c^pg.\bp$, which we know to be close to $g^{-1}c^p\!.\bp$.
Write
$g^{-1}\!.\bp=h^{-1}c^pb.\bp$ with $|g|\eadd |h|+|c^p|+|b|$ and write
$h.\bp=bc^pb'.\bp$ as in \fullref{fig:3_3_2} with $|h|\eadd |b|+|c^p|+|b'|$; together these give
$|g|\eadd 2|b|+2|c^p|+|b'|$. 

\medskip

Suppose we are in the case of \fullref{fig:3_3_2}, so there
are elements $b$ and $b'$ such that $(r-|c^p|)/2\eadd |g|\eadd
2|b|+2|c^p|+|b'|$.
Since $G$ has purely exponential growth,
if $i\leqslant |b|<i+1$ there are, up to a bounded multiplicative error
independent of $p$, $r$, and $i$,
at most $\exp(\delta_Gi)$ possible choices for $b$ and
at most $\exp(\delta_G(\frac{r-5|c^p|}{2}-2i))$ choices of $b'$, so there is an
upper bound for the number of possible elements $g$ by a multiple of:
\begin{equation}
  \label{eq:2}
  \sum_{i=0}^{\frac{r-5|c^p|}{4}}\exp(\delta_Gi)\exp\left(\delta_G\left(\frac{r-5|c^p|}{2}-2i\right)\right)<\frac{\exp(r\delta_G/2)}{\exp\left(5\delta_G|c^p|/2\right)(1-\exp(-\delta_G))}
\end{equation}
The case of \fullref{fig:3_3_1} is similar, but gives
an even smaller upper bound\footnote{Replace each `5' in \eqref{eq:2}
  with a `7'. This accounts for
the restriction that $r-7|c^p|>0$.}.
Thus, for all sufficiently large $p$ and $r$:
\begin{equation}
  \label{eq:12}
  \# G_{4,p,D}'\cap
S_r^{\Delta'}\lmul \exp(-5\delta_G|c^p|/2)\exp(r\delta_G/2)
\end{equation}

Combining \eqref{eq:10} and \eqref{eq:12} gives:

\begin{equation}
  \label{eq:7}
  \# G_{4,p,D}'\cap S_r^{\Delta'}\lmul \exp(-2|c^p|\delta_G)\cdot\#G_{3,p}\cap S_r^{\Delta'}
\end{equation}
Crucially, the multiplicative constant in this asymptotic inequality
does not depend on $p$, so for $p$ sufficiently large, $\exp(2|c^p|\delta_G)$ is more than twice the
multiplicative constant, and  \eqref{eq:7} becomes a true inequality
$\#G'_{4,p,D}\cap S_r^{\Delta'}< \frac{1}{2}\#G_{3,p}\cap S_r^{\Delta'}$.
We conclude that to get $G_{4,p,D}$ from $G_{3,p}$ we threw away fewer than half
of the points of $G_{3,p}$ in each spherical shell $S_r^{\Delta'}$ such
that $r>7|c^p|$.
\end{proof}

\begin{lemma}\label{lemma:injection}
  For all sufficiently large $D$, for all sufficiently large $p$, the map $\hat{G}_{4,p,D}\to X:(g_1,\dots,g_k)\mapsto (\prod_{i=1}^kg_ic^{2p}).\bp$ is
  an injection.
\end{lemma}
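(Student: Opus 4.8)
\emph{The plan} is to prove injectivity by a minimal counterexample argument: a collision between two distinct tuples will force a point of the form (element of $G_{3,p}$)$\cdot c^{2p}.\bp$ to lie near a geodesic from $\bp$ to $g_1.\bp$ (or to $h_1.\bp$), contradicting membership in $G_{4,p,D}$. So suppose $w:=\prod_{i=1}^{k}g_ic^{2p}$ and $w':=\prod_{j=1}^{l}h_jc^{2p}$ with all $g_i,h_j\in G_{4,p,D}$, $w.\bp=w'.\bp$, and the pair does not satisfy $k=l$ and $g_i=h_i$ for all $i$; choose one minimizing $k+l$. A nonempty product of $c^{2p}$\=/conjugates moves $\bp$ a distance $\gadd|c^p|$ (by \fullref{phi1estimate} and \fullref{BGI}), so $k,l\geqslant 1$; and if $g_1=h_1$ we cancel $g_1c^{2p}$ on the left to get a shorter colliding pair, which by minimality matches, so the original matches too --- a contradiction. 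Hence $g_1\neq h_1$ in the minimal counterexample; write $g_1=a^{-1}c^pa$, $h_1=b^{-1}c^pb$ with $a,b\in G_1\setminus E(c)$, and it suffices to contradict $g_1\neq h_1$.

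The geometric core is a fellow\-/travelling statement. Let $\gamma$ be a geodesic from $\bp$ to $q:=w.\bp=w'.\bp$. I would show that $\gamma$ fellow\-/travels, with constants depending only on `the constants' (not on $p$, $k$, $l$, or the conjugators), the polygonal path through the syllable partial products $\bp,\ a^{-1}.\bp,\ a^{-1}c^p.\bp,\ g_1.\bp,\ g_1c^{2p}.\bp,\dots,w.\bp$ of $w$, and the analogous path for $w'$. This is the standard local\-/to\-/global argument for paths alternating long segments along distinct, consecutively ordered translates of $\E$ with convex turns: the long segments have length $|c^p|$ or $|c^{2p}|$, dominating all constants once $p$ is large; consecutive translates are distinct and correctly ordered by \fullref{G2order} (translated by a partial product) and \fullref{order}; and the turns are convex since each syllable lies in $G_1$, so the connecting segments project with diameter $\leqslant 2K$ onto the adjacent axes, while \eqref{eq:6} controls the turn at each axis $g_j\E$. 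Consequently each syllable partial product lies in a bounded neighbourhood of $\gamma$ at arc\-/length parameter equal to its distance from $\bp$ up to bounded error; in particular $d(\bp,g_1.\bp)\eadd 2|a|+|c^p|$ and $d(\bp,g_1c^{2p}.\bp)\eadd 2|a|+|c^p|+|c^{2p}|$ (by \fullref{phi1estimate}, \eqref{eq:6}, \fullref{BGI}), similarly for $h_1$; the sub-arc of $\gamma$ from $g_1.\bp$ to $g_1c^{2p}.\bp$ lies in a bounded neighbourhood of $g_1\E$ and has length $\eadd|c^{2p}|$, and the one from $h_1.\bp$ to $h_1c^{2p}.\bp$ in a bounded neighbourhood of $h_1\E$; and the initial part of $\gamma$ up to $g_1.\bp$ fellow\-/travels $[\bp,g_1.\bp]$ (likewise for $h_1$).

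I would then run a trichotomy on $|a|$ versus $|b|$, with a threshold $L$ comparable to $D$. If $|a|\geqslant|b|+|c^p|-L$, then $d(\bp,h_1c^{2p}.\bp)$ exceeds $d(\bp,g_1.\bp)$ by at most $2L$ up to bounded error, so comparing arc\-/length parameters along $\gamma$ puts $h_1c^{2p}.\bp$ within $2L+O(1)\leqslant D$ of $g_1.\bp$ or of the part of $\gamma$ fellow\-/travelling $[\bp,g_1.\bp]$, hence within $D$ of $[\bp,g_1.\bp]$; since $h_1\in G_{3,p}$ and $h_1\neq g_1$, this contradicts $g_1\in G_{4,p,D}$. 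The case $|b|\geqslant|a|+|c^p|-L$ is symmetric and contradicts $h_1\in G_{4,p,D}$. In the remaining case $\bigl||a|-|b|\bigr|<|c^p|-L$ the sub-arcs of $\gamma$ near $g_1\E$ and near $h_1\E$ overlap in a common sub-arc of length $\geqslant 2L$ up to bounded error, lying in bounded neighbourhoods of both $g_1\E$ and $h_1\E$; were these distinct, \pzero and strong contraction would bound the length of such a sub-arc by a constant, impossible once $L$ (hence $D$) is large. So $g_1\E=h_1\E$, whence $g_1=h_1$ by \fullref{sameaxissameelement}, contradicting $g_1\neq h_1$. Taking $D$ large enough for the trichotomy to close, and then $p$ large, finishes the proof.

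The hard part is the fellow\-/travelling step and the constant calibration it controls: one must prove the local\-/to\-/global lemma with constants independent of $p$, $k$, $l$, and especially of the conjugators $a,b$ (which may be long and may themselves run along other translates of $\E$), and then choose the threshold $L$, the shadow radius $D$, and finally the power $p$ so that the three cases genuinely exhaust all possibilities and each yields a contradiction. Everything else is bookkeeping with \fullref{phi1estimate}, \fullref{BGI}, \eqref{eq:6}, and \fullref{G2order}.
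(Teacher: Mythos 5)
Your high-level strategy matches the paper's: minimal counterexample, reduce to $g_1\neq h_1$, and derive a contradiction by showing some $h^{-1}c^ph c^{2p}.\bp$ lands in the shadow of $g_1$ (or $h_1$). But there is a genuine gap, and also an organizational detour worth pointing out.

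\textbf{The gap.} You defer the entire technical content of the lemma to a ``standard local-to-global'' fellow-travelling statement, which you explicitly do not prove. This is precisely where the work is. The paper proves, by induction on $j-i$ using the axioms \pzero, \pone, \spthree, \spfour from \fullref{BBFS}, the projection estimate labelled \eqref{eq:13}: $d^\pi_{\Z_i}(z'_i,\Z_j)<5K$ and $d^\pi_{\Z_j}(z_j,\Z_i)<5K$ for all $0\leqslant i<j\leqslant 2k$. That induction (with its delicate use of \spthree to transport a large projection distance down the chain) is what makes the broken path behave like a quasi-geodesic and is the content you need. It is not a black box one can cite: the constants must be uniform in $k$, $p$, and the conjugators (as you yourself flag), and the induction is exactly how one controls this. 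Without it, your trichotomy has nothing to stand on. A sketch that says ``this is the standard argument'' for the step that is the argument should be counted as incomplete.

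\textbf{A second subtlety you gloss over.} Membership in $G'_{4,p,D}$ requires a point close to \emph{some geodesic from $\bp$ to $g_1.\bp$}, not to a sub-arc of the geodesic $\gamma$ from $\bp$ to the common image $z$. In a general geodesic metric space you cannot pass from ``$\gamma$ restricted to an initial segment ends near $g_1.\bp$'' to ``$[\bp,g_1.\bp]$ fellow-travels that segment''. The correct move---and the one the paper makes---is purely projection-theoretic: from \eqref{eq:13} one gets $d^\pi_{h_1\E}(g_1.\bp,h_1c^{2p}.\bp)$ small and $d^\pi_{h_1\E}(\bp,g_1.\bp)$ large, so \fullref{BGI} applied to \emph{any} geodesic from $\bp$ to $g_1.\bp$ forces it to pass within $2C'$ of $\pi_{h_1\E}(g_1.\bp)$, which is within $7K$ of $h_1c^{2p}.\bp$. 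Your phrasing ``the initial part of $\gamma$ up to $g_1.\bp$ fellow-travels $[\bp,g_1.\bp]$'' would need exactly this BGI argument to be justified; as stated it reads as an appeal to Morse stability that is not available here.

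\textbf{On the trichotomy.} Your three-way case split on $|a|$ versus $|b|$ is not wrong, but it is doing work the order $\order$ of \fullref{order} already does for free. Once \eqref{eq:13} is established, all the axes $\Z_i$, $\Z'_j$ lie in the totally ordered set $\YY[\Z_0,\Z_{2m}]$ (after observing $\Z_{2m}=\Z'_{2n}$). Since $g_1\E\neq h_1\E$, one of them is $\order$-smaller, and the two cases are symmetric; there is no third possibility. Your ``overlap'' case exists only because you are tracking raw arc-lengths instead of the order; it correctly resolves to $g_1\E=h_1\E$, but then it is superfluous given \fullref{sameaxissameelement}. The paper's use of $\order$ is cleaner and avoids the extra threshold $L$ and the attendant constant-chasing.

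In summary: the skeleton is right, the two end-cases of your trichotomy recover the paper's argument, but the inductive claim \eqref{eq:13} (or your fellow-travelling surrogate) and the ``any geodesic'' upgrade via \fullref{BGI} are missing, and those are the substance of the proof, not bookkeeping.
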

\begin{proof}
Consider a point $(\prod_{i=1}^kg_ic^{2p}).\bp$ in the image.
  Set $g_0:=c^{-2p}$. Suppose that for each $i$ we have
  $g_i=e_i^{-1}c^pe_i$ for $e_i\in G_1$.
  For $0\leqslant i\leqslant k$ set
  $z'_{2i}:=(\prod_{j=0}^ig_jc^{2p}).\bp$,
  $z_{2i}:=(\prod_{j=0}^ig_jc^{2p})c^{-2p}\!.\bp$, and
  $\Z_{2i}:=(\prod_{j=0}^ig_jc^{2p})\E$.
  For $0<i\leqslant k$ set
  $z_{2i-1}:=(\prod_{j=0}^{i-1}g_jc^{2p})e_{2i-1}^{-1}.\bp$,
  $z'_{2i-1}:=(\prod_{j=0}^{i-1}g_jc^{2p})e_{2i-1}^{-1}c^p\!.\bp$, and
  $\Z_{2i-1}:=(\prod_{j=0}^{i-1}g_jc^{2p})e_{2i-1}^{-1}\E$.
  See \fullref{fig:zigzag}.

  \begin{figure}[h]
    \labellist
    \tiny
    \pinlabel $z_0=c^{-2p}\!.\bp$ [br] at 35 3 
    \pinlabel $z_0'=\bp$ [bl] at 60 3
    \pinlabel $z_1=e_1^{-1}\!.\bp$ [tr] at 110 45
    \pinlabel $z_1'=e_1^{-1}c^p\!.\bp$ [tl] at 133 45
     \pinlabel $z_2$ [br] at 185 3 
     \pinlabel $z_2'$ [bl] at 225 3
      \pinlabel $z_{2k-1}$ [tr] at 280 43
    \pinlabel $z_{2k-1}'$ [tl] at 291 45
      \pinlabel $z_{2k}$ [br] at 300 3
      \pinlabel $(\prod_{i=1}^kg_ic^{2p}).\bp=z_{2k}'$ [bl] at 300 3
      \small
    \pinlabel $\Z_0=\E$ [t] at 35 0
    \pinlabel $\Z_1=e_1^{-1}\E$ [b] at 122 45
    \pinlabel $\Z_2=e_1^{-1}c^pe_1\E=g_1\E$ [t] at 206 0
    \pinlabel $\Z_{2k-1}$ [b] at 290 45
    \pinlabel $\Z_{2k}=(\prod_{i=1}^kg_ic^{2p})\E$ [t] at 320 0
    \endlabellist
    \centering
    \includegraphics[width=.9\textwidth]{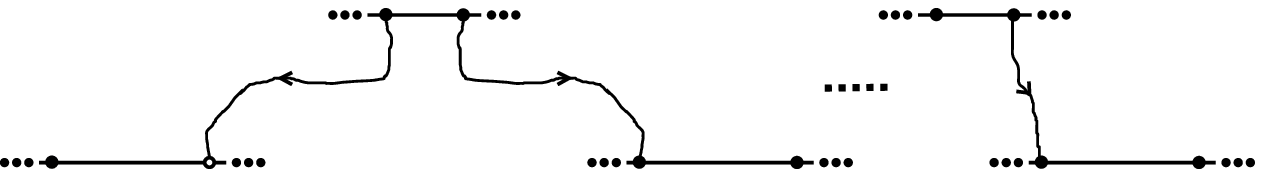}
    \caption{$(\prod_{i=1}^kg_ic^{2p}).\bp$}
    \label{fig:zigzag}
  \end{figure}
 
  Let us complete the proof assuming the following claim, to which we
  shall return:
\begin{equation}
  \label{eq:13}
  \forall 0\leqslant i<j\leqslant 2k,\quad
  d^\pi_{\Z_i}(z'_i,\Z_j)< 5K\quad\text{ and } \quad d^\pi_{\Z_j}(z_j,\Z_i)< 5K
\end{equation}

When $p$ is sufficiently large, $d(z_i,z_i')\gg 10K$ for all $i$, so \eqref{eq:13} implies that  $\Z_i\order\Z_j$ for
  all $0\leqslant i<j\leqslant 2k$, where $\order$ is the order of
  \fullref{order} on $\YY[\Z_0,\Z_{2k}]$.
  
Suppose that the map $\hat G_{4,p,D}\to X$ is not an injection; there exist distinct
  elements  $(g_1,\dots,g_m)$ and $(h_1,\dots,h_n)$ of $\hat G_{4,p,D}$ with the same image $z\in X$. 
  Suppose $m+n$ is minimal among such tuples.
If $h_1\E=g_1\E$ then $h_1=g_1$ by \fullref{sameaxissameelement}.
This contradicts minimality of $m+n$, so we must have $h_1\E\neq g_1\E$.
Let $\Z_0,\dots,\Z_{2m}$ be as in \fullref{fig:zigzag} for
$(g_1,\dots,g_m)$.
By definition, $\bp\in\Z_0$ and $z\in \Z_{2m}$.
By \eqref{eq:13}, $\pi_{\Z_{2m}}(\bp)$ is close to $z_{2m}$.
By \fullref{BGI}, any geodesic from $\bp$ to $z$ ends with a segment
that stays close to the subsegment of $\Z_{2m}$ between $z_{2m}$ and
$z=z'_{2m}$.
However, if $\Z'_0,\dots,\Z'_{2n}$ are as in \fullref{fig:zigzag} for
$(h_1,\dots,h_n)$, then the same is true for $\Z'_{2n}$, which implies
$d^\pi_{\Z_{2m}}(\Z'_{2n},\Z'_{2n})\gadd d(z_{2m},z'_{2m})=|c^{2p}|$.
Once $p$ is sufficiently large, \pzero requires
$\Z_{2m}=\Z'_{2n}$.
Thus, $\YY{[\Z_0,\Z_{2m}]}=\YY{[\Z'_0,\Z'_{2n}]}$, and all of the
$\Z_i$ and $\Z'_j$ are comparable in the order $\order$ on $\YY{[\Z_0,\Z_{2m}]}$.
In particular, $\Z'_2=h_1\E\neq g_1\E=\Z_2$, so one of them comes
before the other.
Suppose, without loss of generality, that $h_1\E\order g_1\E$.
Then $d_{h_1\E}(g_1\E,\Z_{2m})\leqslant \theta'$, by \fullref{order},
and $d^\pi_{h_1\E}(\Z_{2m},h_1c^{2p}\!.\bp)<5K$ by \eqref{eq:13},
so:
\begin{align*}
d^\pi_{h_1\E}(g_1.\bp,h_1c^{2p}\!.\bp)&\leqslant
                                      d^\pi_{h_1\E}(g_1\E,\Z_{2m})+d^\pi_{h_1\E}(\Z_{2m},h_1c^{2p}\!.\bp)\\
  &< \theta'+2\theta+5K<7K
\end{align*}
On the other hand, $d^\pi_{h_1\E}(\bp,h_1.\bp)<3K$, by \eqref{eq:6}, so
$d^\pi_{h_1\E}(\bp,g_1.\bp)\geqslant |c^{2p}|-10K\gg C'$.
By \fullref{BGI}, any geodesic from $\bp$ to $g_1.\bp$ passes within
distance $2C'$ of $\pi_{h_1\E}(g_1.\bp)$, which is less than $7K$ from $h_1c^{2p}\!.\bp$.
This means $g_1\in G_{4,p,(7K+2C')}'$, which is a
contradiction if $D\geqslant 7K+2C'$.
Thus, if $D\geqslant 7K+2C'$ then for sufficiently large $p$ the map is injective.

  \smallskip

We prove \eqref{eq:13} by induction on $m=j-i$.
For each $0\leqslant i<2k$ we have that $z_i'$ and $z_{i+1}$ differ by
an element of $G_1$, so $\Z_i\neq\Z_{i+1}$ and
$d^\pi_{\Z_{i+1}}(z_{i+1},z'_i)\leqslant 2K$.
Furthermore, by \pzero,
$d^\pi_{\Z_{i+1}}(\Z_i,\Z_i)\leqslant \theta$.
Thus:
  \[d^\pi_{\Z_{i+1}}(z_{i+1},\Z_i)\leqslant d^\pi_{\Z_{i+1}}(z_{i+1},z'_i)+d^\pi_{\Z_{i+1}}(z'_i,\Z_i)\leqslant d^\pi_{\Z_{i+1}}(z_{i+1},z'_i)+d^\pi_{\Z_{i+1}}(\Z_i,\Z_i)\leqslant 2K+\theta<3K\]
Similarly, $d^\pi_{\Z_i}(z'_i,\Z_{i+1})< 3K$.

Now extend $m$ to $m+1$: Suppose that for some $m\geqslant 1$ and all $0<j-i\leqslant m$ we have
$d^\pi_{\Z_j}(z_j,\Z_{i})< 5K$ and
$d^\pi_{\Z_i}(z'_i,\Z_{j})< 5K$. (Note that this implies $\Z_i\neq\Z_j$.)
Then for all $0\leqslant i\leqslant 2k-m-1$:
\[d_{\Z_{i+1}}(\Z_{i+m+1},\Z_i)\geqslant
  d^\pi_{\Z_{i+1}}(\Z_{i+m+1},\Z_i)-2\theta>
  d(z_{i+1},z'_{i+1})-10K-2\theta\gg\theta'\]
The final inequality is true for sufficiently large $p$, because 
the distance between $z_{i+1}$ and $z_{i+1}'$ is either $|c^p|$ or
$|c^{2p}|\eadd 2|c^p|$, according to whether $i$ is even or odd.
Thus, by \spthree and \spfour:
\[d^\pi_{\Z_i}(\Z_{i+m+1},\Z_{i+1})\leqslant
  d_{\Z_i}(\Z_{i+m+1},\Z_{i+1})+2\theta=d_{\Z_i}(\Z_{i+1},\Z_{i+1})+2\theta\leqslant\theta'+2\theta<2K\]
which
implies:
\[d^\pi_{\Z_i}(z'_i,\Z_{i+m+1})\leqslant
  d^\pi_{\Z_i}(z'_i,\Z_{i+1})+d^\pi_{\Z_i}(\Z_{i+1},\Z_{i+m+1})< 3K+2K=5K\]
A similar argument gives
$d^\pi_{\Z_{i+m+1}}(z_{i+m+1},\Z_{i})< 5K$.
This completes the induction.
\end{proof}

\begin{proof}[Proof of \fullref{main}]
  Take $D$ and $p$ as in \fullref{lemma:injection}.
  For this $D$, enlarge $p$ if necessary to satisfy the hypotheses of
  \fullref{lemma:divrate}.
  Set $G_4:=G_{4,p,D}$.
\end{proof}

\section{Questions}
\begin{question}\label{q1}
  Can we replace purely exponential growth of $G$ by divergence of $G$
  in \fullref{maintheorem}?
\end{question}
By \cite{MatYabJae15}, the answer is `yes' when $X$ is hyperbolic.

\medskip

Recall in \eqref{eq:11} we showed
$\Theta_{G}(s)$ is comparable to
$\Theta_{G_{3,p}}(s/2)$, while it is clear that $\Theta_{G_{3,p}}(s/2)\leqslant \Theta_N(s/2)$.
If $G$ is divergent then $\Theta_{G}(s)$ diverges at $s=\delta_G$, which means $\Theta_N(t)$
diverges at $t=\delta_G/2$.
There are two possible circumstances in which $\Theta_N(t)$ diverges at $t=\delta_G/2$:
\begin{equation}
  \label{eq:5}
  \text{Either }\delta_N>\delta_G/2, \text{ or } \delta_N=\delta_G/2
  \text{ and } N \text{ is divergent.}
\end{equation}
We proved the first case of \eqref{eq:5} directly, with the additional
assumption of purely exponential growth of $G$.
  The approach of \cite{MatYabJae15} is
  to prove, if $X$ is hyperbolic, that 
  $\delta_N=\delta_G$ when $N$ is divergent, so, since $\delta_G>\delta_G/2$, the second case of \eqref{eq:5} is
  impossible.
  Thus, a positive answer to \fullref{q1} would be implied by a
  positive answer to the following question, which is also interesting
  in its own right.
  \begin{question}\label{q2}
    If $G$ is a group acting properly by isometries
    with a strongly contracting element
    on a geodesic metric space $X$ and $G\act X$ is divergent, is it true that for every
    divergent normal subgroup $N$ of $G$ we have $\delta_N=\delta_G$?
  \end{question}

  Jaerisch and Matsuzaki \cite{JaeMat17} show that if $F$ is a finite
  rank free group and $N$ is a non-trivial normal subgroup of $F$
  then, with respect to a word metric defined by a free generating set
  of $F$, there is a inequality
  $\delta_N+\frac{1}{2}\delta_{F/N}\geqslant \delta_F$.
  Notice, $\delta_N>\delta_F/2$ by the lower cogrowth bound, and
  $\delta_{F/N}<\delta_F$ by growth tightness of $F$.

  \begin{question}
    Is there an analogue of Jaerisch and Matsuzaki's inequality for
    $G$ acting with a strongly contracting element and complementary
    growth gap? Note that we know both growth tightness, by
    \cite{ArzCasTao15}, and lower cogrowth bound, by \fullref{maintheorem},
    for such actions.
  \end{question}

  For $G=X=F_n$ \cite{MR599539,MR2141699,MR1233406} and
  $X=\mathbb{H}^2$ and $G$ a closed surface group \cite{MR2891734},
  there exists a sequence $(N_i)_{i\in\mathbb{N}}$ of normal subgroups of $G$ such that
  $\delta_{N_i}/\delta_G$ limits to $1/2$, so the lower cogrowth bound
  is optimal.

  \begin{question}
    Is the lower cogrowth bound optimal in \fullref{maintheorem}?
  \end{question}

  \medskip

  We must mention that the \emph{upper} cogrowth bound is also
  very interesting.
  Grigorchuk \cite{MR599539} and Cohen \cite{Coh82} showed that when
  $F$ is a finite rank free group, with respect to a word metric defined
  by a free generating set the upper cogrowth bound
  $\delta_N/\delta_F=1$ is achieved for $N\lhd F$ if and only if $F/N$
  is amenable.
  There have been several generalizations  \cite{MR783536,MR2166367,MR3220551,DouSha16,CouBoSam17} to growth rates defined with
  respect to an action $G\act X$, but the most general to date
  \cite{CouBoSam17} still requires $G$ to be hyperbolic, the action to be cocompact, and $X$ to be
  either a Cayley graph of $G$ or a CAT(-1) space. In the vein of our theorem,
  it would be very interesting to generalize such a result to a
  non-hyperbolic setting.

\enlargethispage*{\baselineskip} 

\providecommand{\href}[2]{#2}

\end{document}